\theoremstyle{plain} 
\newtheorem{theorem}{Theorem}[section]
\newtheorem{proposition}{Proposition}[section]
\theoremstyle{plain}
\newtheorem{remark}[proposition]{Remark}  
\theoremstyle{nonumberplain}
\newtheorem{proof}{Proof}
\definecolor{refkey}{rgb}{0,1,0} 
\definecolor{labelkey}{rgb}{0,1,1}
\title{\textsc{Data-Driven Finite Elements Methods: Machine Learning Acceleration of Goal-Oriented Computations}\\
\textsc{}
\\ 
}
\author{I.~Brevis\footnotemark[1], I.~Muga\footnotemark[2], and K.G.~van~der~Zee\footnotemark[3]}
\date{\today}
\newlength{\bigfboxsep}
\newcommand{\bigfbox}[1]{\setlength{\fboxsep}{\bigfboxsep}\fbox{#1}}
\newcommand{\Span}{\operatorname{Span}}
\newcommand{\ANN}{\operatorname{ANN}}
\DeclareMathOperator*{\argmin}{argmin}
\newcommand{\xnot}{x_{0}}
\newcommand{\mtbb}[1]{\mathbb{#1}}
\newcommand{\mbbN}{\mtbb{N}}
\newcommand{\mbbR}{\mtbb{R}}
\newcommand{\mbbU}{\mtbb{U}}
\newcommand{\mbbV}{\mtbb{V}}
\begin{document}
%
%
\maketitle
%
%
\renewcommand{\thefootnote}{\fnsymbol{footnote}}
\footnotetext[1]{Pontificia Universidad Cat\'olica de Valpara\'iso, Instituto de Matem\'aticas.\\ \hspace*{15pt} ignacio.brevis.v@gmail.com}
\footnotetext[2]{Pontificia Universidad Cat\'olica de Valpara\'iso, Instituto de Matem\'aticas.\\ \hspace*{15pt} ignacio.muga@pucv.cl}
\footnotetext[3]{University of Nottingham, School of Mathematical Sciences.\\ \hspace*{15pt} kg.vanderzee@nottingham.ac.uk}
\renewcommand{\thefootnote}{\arabic{footnote}}
%
%
\begin{abstract}
\noindent We introduce the concept of data-driven finite element methods. These are finite-element discretizations of partial differential equations (PDEs) that resolve quantities of interest with striking accuracy, regardless of the underlying mesh size. The methods are obtained within a machine-learning framework during which the parameters defining the method are tuned against available training data. In particular, we use a stable parametric Petrov--Galerkin method that is equivalent to a minimal-residual formulation using a weighted norm. While the trial space is a standard finite element space, the test space has parameters that are tuned in an off-line stage. Finding the optimal test space therefore amounts to obtaining a goal-oriented discretization that is completely tailored towards the quantity of interest. 
As is natural in deep learning, we use an artificial neural network to define the parametric family of test spaces. Using numerical examples for the Laplacian and advection equation in one and two dimensions, we demonstrate that the
data-driven finite element method has superior approximation of quantities of interest even on very coarse meshes
%
%
%
\end{abstract}
%
%
\noindent {\bf Keywords} Goal-oriented finite elements $\cdot$ 
Machine-Learning acceleration $\cdot$ 
Residual Minimization $\cdot$ 
Petrov-Galerkin method $\cdot$ 
Weighted inner-products $\cdot$ 
Data-driven algorithms.

%

\noindent {\bf MSC 2020} 41A65 $\cdot$ 65J05 $\cdot$ 65N15 $\cdot$ 65N30 $\cdot$ 65L60 $\cdot$ 68T07
%
\newpage
\tableofcontents
%
%
\pagestyle{myheadings}
\thispagestyle{plain}
\markboth{\small I. Brevis, I. Muga, and K.G. van der Zee}{\small Data-driven FEM: Machine learning acceleration of goal-oriented computations}
%
\section{Introduction}

%

In this paper we consider the data-driven acceleration of Galerkin-based discretizations, in particular the finite element method, for the approximation of partial differential equations (PDEs). The aim is to obtain approximations on meshes that are \emph{very} coarse, but nevertheless resolve quantities of interest with \emph{striking} accuracy. 
\par
We follow the machine-learning framework of Mishra~\cite{mishra2018machine}, who considered the data-driven acceleration of \emph{finite-difference} schemes for ordinary differential equations (ODEs) and PDEs. In Mishra's machine learning framework, one starts with a \emph{parametric family} of a stable and consistent numerical method on a fixed mesh (think of, for example, the $\theta$-method for ODEs). Then, a training set is prepared, typically by offline computations of the PDE subject to a varying set of data values (initial conditions, boundary conditions, etc), using a standard method on a (very) fine mesh. Accordingly, an optimal numerical method on the coarse grid is found amongst the general family, by minimizing a loss function consisting of the errors in quantities of interest with respect to the training data.
\par
The objective of this paper is to extend Mishra's machine-learning framework to \emph{finite element methods}. The main contribution of our work lies in the identification of a proper stable and consistent general family of finite element methods for a given mesh that allows for a robust optimization. In particular, we consider a parametric Petrov--Galerkin method, where the trial space is fixed on the given mesh, but the test space has trainable parameters that are to be determined in the offline training process. Finding this optimized test space therefore amounts to obtaining a coarse-mesh discretization that is completely tailored for the quantity of interest.
\par
A crucial aspect for the stability analysis is the equivalent formulation of the parametric Petrov--Galerkin method as a \emph{minimal-residual} formulation using discrete dual norms. Such techniques have been studied in the context of discontinuous Petrov--Galerkin (DPG) and optimal Petrov--Galerkin methods; see for example the overview by Demkowicz \& Gopalakrishnan~\cite{DemGopBOOK-CH2014} (and also~\cite{MugZeeARXIV2018} for the recent Banach-space extension). A key insight is that we can define a suitable test-space parametrization, by using a (discrete) trial-to-test operator for a test-space norm based on a parametric weight function. This allows us to prove the stability of the parametric minimal-residual method, and thus, by equivalence, proves stability for the parametric Petrov--Galerkin method. 
\par
As is natural in \emph{deep learning}, we furthermore propose to use an \emph{artificial neural network} for the weight function defining the test space in the Petrov--Galerkin method. The training of the tuning parameters in the neural network is thus achieved by a minimization of a loss function that is implicitly defined by the neural network (indeed via the weight function that defines the test space, which in turn defines the Petrov-Galerkin approximation, which in turn leads to a value for the quantity of interest). 
\par
\par
%
%
\par

\subsection{Motivating example}\label{sec:motivation}
To briefly illustrate our idea, let us consider a very simple motivating example. We consider the following simple 1-D elliptic boundary-value problem:
\begin{equation}
\label{eq:intro_main_equation}
\left\{
\begin{array}{ll}
-u_\lambda''= \delta_\lambda & \hbox{in } (0,1),\\
u_\lambda(0) =u_\lambda'(1)=0, 
\end{array}
\right.
\end{equation}
where $\delta_\lambda$ denotes the usual Dirac's delta distribution centered at the point 
$\lambda\in(0,1)$. The quantity of interest (QoI) is the value $u_\lambda(\xnot)$ of the solution at some fixed point $\xnot\in(0,1)$. 
\par
The standard variational formulation of problem~\eqref{eq:intro_main_equation} reads:
\begin{equation}
\label{eq:intro_model}
\left\{
\begin{array}{l}
\text{Find } u_\lambda\in H_{(0}^{1}(0,1) \text{ such that:} \\
\displaystyle \int_{0}^{1} u_\lambda'v' = v(\lambda), \qquad\forall v\in H_{(0}^{1}(0,1), 
\end{array}
\right.
\end{equation}
where $H_{(0}^{1}(0,1):= \{v\in L^2(0,1): v'\in L^2(0,1) \wedge v(0)=0\}$.
For the very coarse discrete subspace $\mbbU_h:=\Span\{\psi\}\subset H_{(0}^{1}(0,1)$ consisting of the single linear trial function~$\psi(x) = x$, the usual Galerkin method approximating~\eqref{eq:intro_model} delivers the discrete solution $u_h(x)=\lambda x$. However, the exact solution to~\eqref{eq:intro_main_equation} is: 
\begin{equation}\label{eq:exact}
u_\lambda(x)=\left\{
\begin{array}{rl}
x & \hbox{if } x\leq \lambda,\\
\lambda & \hbox{if } x\geq \lambda.
\end{array}\right.
\end{equation}
Hence, the relative error in the QoI for this case becomes:
\begin{equation}\label{eq:Galerkin_error}
\frac{|u_\lambda(\xnot)-u_h(\xnot)|}{|u_\lambda(\xnot)|}=\left\{
\begin{array}{cl}
1-\lambda & \hbox{if } \xnot\leq \lambda,\\
1-\xnot & \hbox{if } \xnot\geq \lambda,
\end{array}\right.
\end{equation}
As may be expected for this very coarse approximation, the relative errors are large (and actually never vanishes except in limiting cases).
\par
Let us instead consider a Petrov--Galerkin method for~\eqref{eq:intro_model}, with the same trial space~$\mbbU_h$, but a special test space~$\mbbV_h$, i.e., $u_h \in \mbbU_h:=\Span\{\psi\}$ such that $\int_{0}^{1} u_h'v_h' = v_h(\lambda)$, for all $v_h\in \mbbV_h:=\Span\{\varphi\}$. We use the parametrized test function 
$\varphi(x)=\theta_1x+ e^{-\theta_2}(1-e^{-\theta_1x})$, which is motivated by the simplest artificial neural network; see Section~\ref{sec:1D_diff} for  details.
By \emph{varying} the parameters $\theta_1,\theta_2\in\mbbR$, the errors in the quantity of interest can be significantly reduced. Indeed, Figure~\ref{fig:intro_example} shows the relative error in the QoI, plotted as a function of the $\theta_1$-parameter, with the other parameter set to $\theta_2=-9$, in the case of~$\xnot = 0.1$ and two values of~$\lambda$. 
When $\lambda=0.15 > 0.1=\xnot$ (left plot in Figure~\ref{fig:intro_example}), the optimal value $\theta_1\approx 48.5$ delivers a relative error of 0.575\% in the quantity of interest. Notice that the Galerkin method has a relative error $>80\%$. For $\lambda=0.05 < 0.1=\xnot$ (right plot in Figure~\ref{fig:intro_example}),
the value $\theta_1\approx 13.9$ actually delivers an \emph{exact} approximation of the QoI, while the Galerkin method has a relative error $\approx 90\%$. 
\par
This example illustrates a general trend that we have observed in our numerical test (see Section~\ref{sec:numTest}): Striking improvements in quantities of interest are achieved using well-tuned test spaces.
\begin{figure}[htp]
\begin{center}
 \begin{subfigure}[b]{0.32\textwidth}
    \includegraphics[width=\textwidth, height=\textwidth]{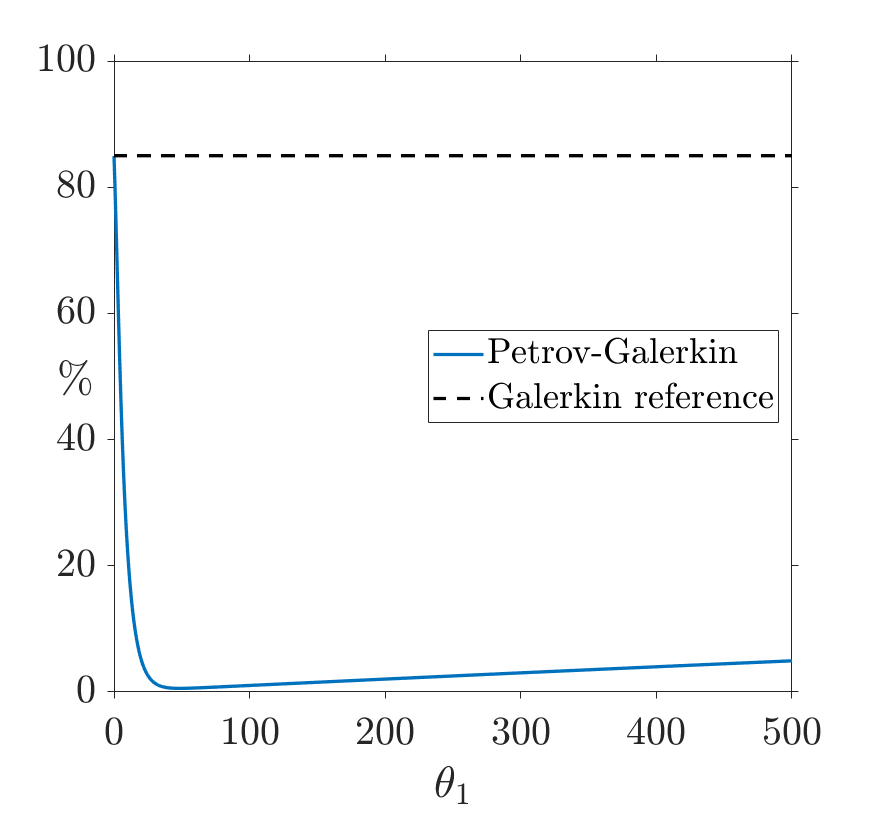}
    \caption{Relative error for $\lambda = 0.15$}
  \end{subfigure}
  \hspace{1cm}
  \begin{subfigure}[b]{0.32\textwidth}
	\includegraphics[width=\textwidth, height=\textwidth]{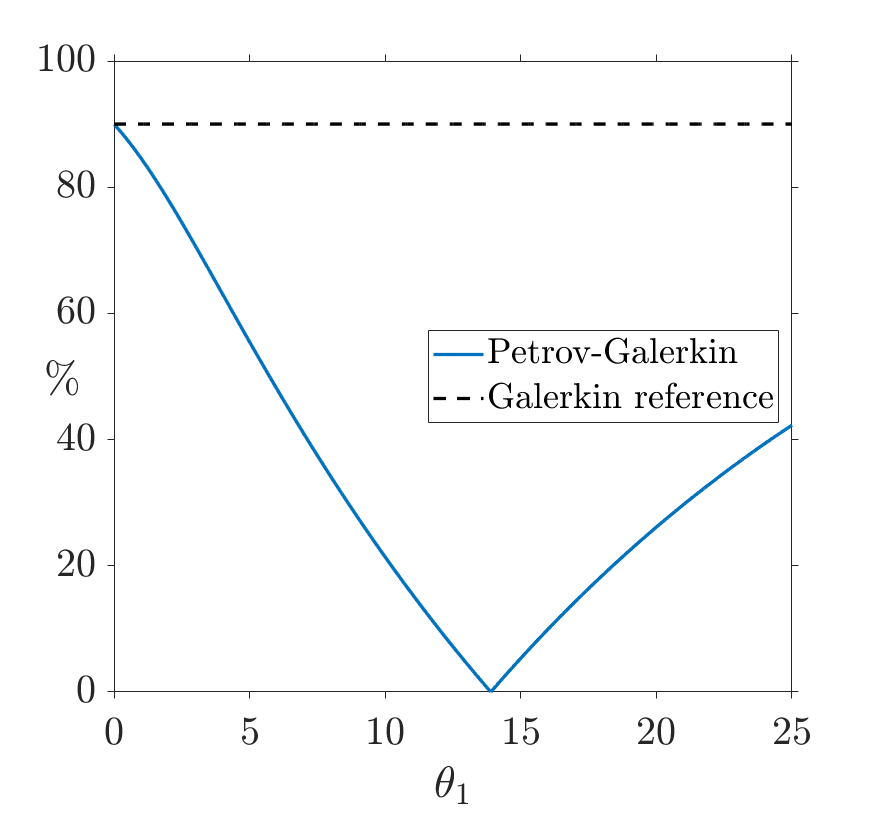}
    \caption{Relative error for $\lambda = 0.05$.}
  \end{subfigure}
  \caption{Relative error in the quantity of interest $\xnot = 0.1$, for different values of $\theta_1$.}
  \label{fig:intro_example}
\end{center}  
\end{figure}


\subsection{Related literature}
Let us note that deep learning, in the form of artificial neural networks, has become extremely popular in scientific computation in the past few years, a crucial feature being the capacity of neural networks to approximate any continuous function~\cite{cybenko1989approx}. While classical applications concern classification and prediction for image and speech recognition~\cite{goodfellow2016deep, lecun2015deep, higham2018deep}, there have been several new advances related to differential equations, either focussing on the data-driven discovery of governing equations~\cite{rudy2017data, berg2019data, RaiKarJCP2018} or the numerical approximation of (parametric) differential equations. 
\par
On the one hand, artificial neural networks can be directly employed to approximate a single PDE solution, see e.g.~\cite{BerNysNC2018,lagaris1998artificial,lee1990neural}, and in particular the recent high-dimensional Ritz method~\cite{EYuCMS2018}. On the other hand, in the area of model order reduction of differential equations, there have been tremendous recent developments in utilizing machine learning to obtain the reduced-order model for parametric models~\cite{ionita2014data,HesUbbJCP2018,regazzoni2019machine,SwiMaiPehWilCF2019,kutyniok2019theoretical}.
These developments are very closely related to recent works that use neural networks to optimize numerical methods, e.g., tuning the turbulence model~\cite{LinKurTemJFM2016}, slope limiter~\cite{RayHesJCP2018} or artificial viscosity~\cite{DisHesRayJCP2020}.
\par
The idea of goal-oriented adaptive (finite element) methods date back to the late 1990s, see e.g.,~\cite{BecRanAN2001,OdePruCMA2001,MomSteSINUM2009} for early works and analysis, and~\cite{FeiPraZeeSINUM2016,KerPruChaLafCMAME2017,BruZhuZwiCMAME2017,EndWicCMAM2017,HayKelRaiSunWesCAMCS2018} for some recent new developments. These methods are based on a different idea than the machine-learning framework that we propose. Indeed, the classical goal-oriented methods aim to adaptively refine the underlying meshes (or spaces) so as to control the error in the quantity of interest, thereby adding more degrees of freedom at each adaptive step. In our framework, we train a finite element method so as to control the error in the quantity of interest based on training data for a parametric model. In particular, we do not change the number of degrees of freedom. 
\subsection{Outline}
The contents of this paper are arranged as follows. Section~\ref{sec:method} presents the machine-learning methodology to constructing data-driven finite element methods. It also presents the stability analysis of the discrete method as well as equivalent discrete formulations. Section~\ref{sec:implement} presents several implementational details related to artificial neural networks and the training procedure. Section~\ref{sec:numTest} present numerical experiments for 1-D and 2-D elliptic and hyperbolic PDEs. Finally, Section~\ref{sec:concl} contains our conclusions.
%
\section{Methodology}
\label{sec:method}

\subsection{Abstract problem}

\noindent Let $\mbbU$ and $\mbbV$ be infinite dimensional Hilbert spaces spaces, with respective dual spaces $\mbbU^*$ and $\mbbV^*$. Consider a boundedly invertible linear operator $B:\mbbU\to \mbbV^{*}$, a family of right-hand-side functionals $\{\ell_\lambda\}_{\lambda\in \Lambda}\subset \mbbV^*$ that may depend non-affinely on~$\lambda$, and a quantity of interest functional $q\in \mbbU^*$. 
Given $\lambda\in \Lambda$, the continuous (or infinite-dimensional) problem will be to find $u_\lambda\in \mbbU$ such that:
\begin{equation}
\label{eq:abstract_eq}
Bu_\lambda = \ell_\lambda, \quad \text{in}\, \mbbV^{*},
\end{equation}
where the interest is put in the quantity $q(u_{\lambda})$. 
In particular, we consider the case when $\left<Bu,v\right>_{\mbbV^*,\mbbV}:=b(u,v)$, for a given bilinear form $b:\mbbU\times\mbbV\to\mbbR$. If so, problem~\eqref{eq:abstract_eq} translates into:
%
\begin{equation}
\label{eq:VarFor_gen_prob}
\left\{
\begin{array}{l}
\text{Find } u_\lambda\in \mbbU \text{ such that:} \\
b(u_\lambda,v)= \ell_{\lambda}(v),\qquad\forall v\in \mbbV, 
\end{array}
\right.
\end{equation}
which is a  type of problem that naturally arises in the context of variational formulations of partial differential equations with multiple right-hand-sides or \emph{parametrized PDEs}.%
\footnote{While parametrized bilinear forms $b_\lambda(\cdot,
\cdot)$ are also possible, they lead to quite distinct algorithmic details. We therefore focus on parametrized right-hand sides and leave parametrized bilinear forms for future work.}

\subsection{Main idea of the accelerated methods}

We assume that the space $\mbbV$ can be endowed with a family of equivalent {\it weighted} inner products $\{(\cdot,\cdot)_{\mbbV,\omega}\}_{\omega\in\mathcal W}$ and inherited norms $\{\|\cdot\|_{\mbbV,\omega}\}_{\omega\in\mathcal W}$, without affecting the topology given by the original norm $\|\cdot\|_\mbbV$ on $\mbbV$. That is, for each $\omega\in\mathcal W$, there exist equivalence constants $C_{1,\omega}>0$ and $C_{2,\omega}>0$ such that:
\begin{equation}\label{eq:norm_equiv}
C_{1,\omega}\|v\|_{\mbbV,\omega}\leq \|v\|_\mbbV\leq C_{2,\omega}\|v\|_{\mbbV,\omega}\,,\qquad
\forall v\in \mbbV.
\end{equation}
Consider a \emph{coarse} finite dimensional subspace $\mbbU_h\subset \mbbU$ where we want to approximate the solution of~\eqref{eq:VarFor_gen_prob}, and let $\mbbV_h\subset \mbbV$ be a discrete test space such that $\dim{\mbbV_h}\geq \dim\mbbU_h$. The discrete method that we want to use to approach the solution $u_\lambda\in\mbbU$ of problem~\eqref{eq:VarFor_gen_prob}, is to find $(r_{h,\lambda,\omega},u_{h,\lambda,\omega})\in\mbbV_h\times \mbbU_h$ such that:
%
\begin{subequations}
\label{eq:mixed_system}
\begin{empheq}[left=\left\{\;,right=\right.,box=]{alignat=3} 
\label{eq:mixed_system_a}
& (r_{h,\lambda,\omega},v_h)_{\mbbV,\omega}
+b(u_{h,\lambda,\omega},v_h)
&&=\ell_\lambda(v_h)
 & \quad  & \forall v_h\in \mbbV_h\,,
\\ \label{eq:mixed_system_b}
&  b(w_h,r_{h,\lambda,\omega})
&& =0  && \forall w_h\in \mbbU_h\,.
\end{empheq}
\end{subequations}
System~\eqref{eq:mixed_system} corresponds to a residual minimization in a discrete dual norm that is equivalent to a Petrov--Galerkin method. See Section~\ref{sec:analysis} for equivalent formulations and analysis of this discrete approach. In particular, the counterpart $r_{h,\lambda,\omega}\in\mbbV_h$ of the solution of~\eqref{eq:mixed_system} is interpreted as a minimal residual representative, while $u_{h,\lambda,\omega}\in\mbbU_h$ is the coarse approximation of $u_\lambda\in\mbbU$ that we are looking for. 

Assume now that one has a reliable sample set of $N_s\in\mbbN$ (precomputed) data $\{(\lambda_i,q(u_{\lambda_i}))\}_{i=1}^{N_s}$, where  $q(u_{\lambda_i})$ is either the quantity of interest of the exact solution of~\eqref{eq:VarFor_gen_prob} with $\lambda=\lambda_i\in\Lambda$, or else, a high-precision approximation of it.
The main goal of this paper is to find a particular weight $ \omega^*\in\mathcal W$, such that for the finite sample of parameters $\{\lambda_i\}_{i=1}^{N_s}\subset\Lambda$, the discrete solutions $\{u_{h,\lambda_i,\omega^*}\}_{i=1}^{N_s}\subset\mbbU_h$ of problem~\eqref{eq:mixed_system} with $\omega=\omega^*$, makes the errors in the quantity of interest as smallest as possible, i.e., 
\begin{equation}\label{eq:min}
\frac{1}{2}\sum_{i=1}^{N_s}\left| q(u_{\lambda_i}) - q(u_{h,\lambda_i,\omega^*})\right|^2\to\min.
\end{equation}
%
%
To achieve this goal we will work with a particular family of weights described by artificial neural networks (ANN). The particular {\it optimal} weight $\omega^*$ will be {\it trained} using data-driven algorithms that we describe in the following. Our methodology will be divided into an expensive 
{\it offline procedure} (see Section~\ref{sec:offline}) and an unexpensive {\it online procedure} (see Section~\ref{sec:online}).
%

In the offline procedure:
\begin{itemize}
\item A weight function $\omega^*\in\mathcal W$ that minimizes~\eqref{eq:min} for a sample set of training data $\{(\lambda_i,q(u_{\lambda_i}))\}_{i=1}^{N_s}$ is obtained.
\item From the matrix related with the discrete mixed formulation~\eqref{eq:mixed_system} using $\omega=\omega^*$, a {\it static condensation} procedure is applied to condense-out the residual variable $r_{h,\lambda,\omega^*}$. The condensed matrices are store for the online procedure.
\end{itemize}

In the online procedure:
\begin{itemize}
\item The condensed mixed system~\eqref{eq:mixed_system} with $\omega=\omega^*$ is solved for multiple right-hand-sides in $\{\ell_\lambda\}_{\lambda\in\Lambda}$, and the quantities of interest 
$\{q(u_{h,\lambda,\omega^*})\}_{\lambda\in\Lambda}$ are directly computed as reliable approximations of $\{q(u_\lambda)\}_{\lambda\in\Lambda}$.
\end{itemize}


\subsection{Analysis of the discrete method}\label{sec:analysis}
In this section we analyze the well-posedness of the discrete system~\eqref{eq:mixed_system}, as well as equivalent interpretations of it. The starting point will be always to assume well-posedness of the continuous (or infinite-dimensional) problem~\eqref{eq:VarFor_gen_prob}, which we will establish below.
\begin{theorem}\label{thm:infinite_wellposedness}
Let $(\mbbU,\|\cdot\|_\mbbU)$ and $(\mbbV,\|\cdot\|_\mbbV)$ be Hilbert spaces, and let $\|\cdot\|_{\mbbV,\omega}$ be the norm inherited from the weighted inner-product ${(\cdot,\cdot)_{\mbbV,\omega}}$, which satisfies the equivalence~\eqref{eq:norm_equiv}. Consider the problem~\eqref{eq:VarFor_gen_prob} and assume the existence of constants $M_\omega> 0$ and $\gamma_\omega> 0$
such that:
%
%
\begin{equation}\label{eq:B_constants}
\gamma_\omega \|u\|_\mbbU \leq \sup_{v\in \mbbV} \frac{|b(u,v)|}{\|v\|_{\mbbV,\omega}}\leq M_\omega \|u\|_\mbbU\,,\qquad\forall u\in\mbbU.
\end{equation}
Furthermore, assume that for any $\lambda\in\Lambda$:
\begin{equation}\label{eq:existence}
\left<\ell_\lambda,v\right>_{\mbbV^*,\mbbV}=0,\quad\forall v\in\mbbV \mbox{ such that } b(\cdot,v)=0\in\mbbU^*\,.
\end{equation}
Then, for any $\lambda\in\Lambda$, there exists a unique $u_\lambda\in \mbbU$ solution of problem~\eqref{eq:VarFor_gen_prob}.
\end{theorem}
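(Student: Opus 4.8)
The statement is, in essence, the Banach--Ne\v{c}as--Babu\v{s}ka (BNB) well-posedness theorem, adapted to the situation in which the operator $B$ associated with $b$ need not be surjective onto all of $\mbbV^*$; the role played by surjectivity in the usual BNB theorem is here taken over by the compatibility condition~\eqref{eq:existence}. The plan is to reduce~\eqref{eq:VarFor_gen_prob} to a standard BNB problem posed on an \emph{effective} test space, and then reinsert~\eqref{eq:existence}.

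First I would pass to the weighted norm. Since $\norm{\cdot}_\mbbV$ and $\norm{\cdot}_{\mbbV,\omega}$ are equivalent by~\eqref{eq:norm_equiv}, the space $(\mbbV,\norm{\cdot}_{\mbbV,\omega})$ is again a Hilbert space, so it is harmless to work with this norm throughout; write $\norm{\cdot}_{\mbbV^*,\omega}$ for the induced dual norm. Define $B\in\mathcal L(\mbbU,\mbbV^*)$ by $\dual{Bu,v}_{\mbbV^*,\mbbV}=b(u,v)$. The right-hand inequality in~\eqref{eq:B_constants} is precisely boundedness, $\norm{Bu}_{\mbbV^*,\omega}\le M_\omega\norm{u}_\mbbU$ (equivalently, continuity of $b$); the left-hand inequality reads $\gamma_\omega\norm{u}_\mbbU\le\norm{Bu}_{\mbbV^*,\omega}$, which immediately yields injectivity of $B$ and, via a Cauchy-sequence argument ($\norm{u_n-u_m}_\mbbU\le\gamma_\omega^{-1}\norm{Bu_n-Bu_m}_{\mbbV^*,\omega}$ together with continuity of $B$), closedness of $\operatorname{range}(B)$.

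The core step is the reduction. Set $N:=\{v\in\mbbV:b(w,v)=0\ \text{for all }w\in\mbbU\}$, a closed subspace, let $\widetilde{\mbbV}$ be its orthogonal complement with respect to $(\cdot,\cdot)_{\mbbV,\omega}$, so that $\mbbV=N\oplus\widetilde{\mbbV}$ and $\widetilde{\mbbV}$ is itself a Hilbert space. For fixed $u$ and $v=v_0+v_1$ with $v_0\in N$, $v_1\in\widetilde{\mbbV}$, one has $b(u,v)=b(u,v_1)$ and $\norm{v}_{\mbbV,\omega}\ge\norm{v_1}_{\mbbV,\omega}$, hence the supremum over $v\in\mbbV$ in~\eqref{eq:B_constants} equals the supremum over $v\in\widetilde{\mbbV}$; thus~\eqref{eq:B_constants} holds verbatim with $\mbbV$ replaced by $\widetilde{\mbbV}$. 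Moreover, if $0\neq v\in\widetilde{\mbbV}$ satisfied $b(u,v)=0$ for all $u\in\mbbU$, then $v\in N\cap\widetilde{\mbbV}=\{0\}$, a contradiction; so the ``non-degeneracy in the test variable'' hypothesis of BNB also holds on $\widetilde{\mbbV}$. By the BNB theorem, $B$ restricts to a Banach-space isomorphism $\mbbU\to\widetilde{\mbbV}^*$. (Equivalently, one could invoke the closed-range theorem: $\operatorname{range}(B)$ equals the annihilator of $\ker B^*=N$.)

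Finally, given $\lambda\in\Lambda$, apply this isomorphism to the restriction $\ell_\lambda|_{\widetilde{\mbbV}}\in\widetilde{\mbbV}^*$ to get a unique $u_\lambda\in\mbbU$ with $b(u_\lambda,v)=\ell_\lambda(v)$ for all $v\in\widetilde{\mbbV}$. For an arbitrary $v=v_0+v_1\in\mbbV$, the component $v_0\in N$ satisfies $b(\cdot,v_0)=0$ in $\mbbU^*$, so~\eqref{eq:existence} gives $\ell_\lambda(v_0)=0$, and therefore $b(u_\lambda,v)=b(u_\lambda,v_1)=\ell_\lambda(v_1)=\ell_\lambda(v)$; thus $u_\lambda$ solves~\eqref{eq:VarFor_gen_prob}, and uniqueness is inherited from injectivity of $B$ on $\widetilde{\mbbV}$ (a homogeneous solution satisfies $b(u,v)=0$ for all $v\in\widetilde{\mbbV}$, hence $u=0$). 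I expect the only genuinely non-routine point to be this passage to the effective test space $\widetilde{\mbbV}$ --- equivalently, the identification of $\operatorname{range}(B)$ --- since that is exactly what lets the compatibility condition~\eqref{eq:existence} substitute for surjectivity of $B$; the remaining arguments are the standard BNB bookkeeping.
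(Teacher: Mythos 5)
Your proof is correct and follows essentially the same route as the paper's: the paper's (very terse) argument observes that \eqref{eq:B_constants} makes $B$ continuous, injective and of closed range, giving uniqueness at once, and obtains existence because \eqref{eq:existence} says $\ell_\lambda$ annihilates $\ker B^*$ and hence lies in the closed range of $B$ by the Banach closed range theorem. Your orthogonal decomposition $\mbbV=N\oplus\widetilde{\mbbV}$ and the BNB argument on the effective test space merely make that closed-range identification explicit, as you yourself note in your parenthetical remark.
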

\begin{proof}
This result is classical. Using operator notation (see eq.~\eqref{eq:abstract_eq}), condition~\eqref{eq:B_constants} says that the operator $B:\mbbU\to \mbbV^*$ such that $\left<Bu,v\right>_{\mbbV^*,\mbbV}=b(u,v)$ is continuous, injective and has a closed range. 
In particular, if $u_\lambda\in\mbbU$ exists, then it must be unique. 
The existence of $u_\lambda$ is guaranteed by condition~\eqref{eq:existence}, since $\ell_\lambda$ is orthogonal to the kernel of $B^*$, which means that $\ell_\lambda$ is in the range of $B$ by the Banach closed range theorem. 
\end{proof}
\begin{remark}
Owing to the equivalence of norms~\eqref{eq:norm_equiv}, if~\eqref{eq:B_constants} holds true for a particular weight $\omega\in\mathcal W$, then it also holds true for the original norm $\|\cdot\|_\mbbV$ of $\mbbV$, and for any other weighted norm linked to the family of weights $\mathcal W$.
\end{remark}

The next Theorem~\ref{thm:discrete_wellposedness} establishes the well-posedness of the discrete mixed scheme~\eqref{eq:mixed_system}.

\begin{theorem}\label{thm:discrete_wellposedness}
Under the same assumptions of Theorem~\ref{thm:infinite_wellposedness}, let $\mbbU_h\subset \mbbU$ and $\mbbV_h\subset \mbbV$ be finite dimensional subspaces such that $\dim{\mbbV_h}\geq \dim\mbbU_h$, and such that the following discrete inf-sup conditon is satisfied:
\begin{equation}\label{eq:inf-sup}
\sup_{v_h\in\mbbV_h}{|b(u_h,v_h)|\over \|v_h\|_{\mbbV,\omega}}\geq \gamma_{h,\omega}\|u_h\|_\mbbU\,, \qquad\forall u_h\in\mbbU_h\,,
\end{equation}
where $\gamma_{h,\omega}>0$ is the associated discrete {\it inf-sup constant}. Then, the mixed system~\eqref{eq:mixed_system} has a unique solution $(r_{h,\lambda,\omega},u_{h,\lambda,\omega})\in\mbbV_h\times\mbbU_h$. Moreover, $u_{h,\lambda,\omega}$ satifies the a~priori estimates:
\begin{equation}\label{eq:apriori_estimates}
\|u_{h,\lambda,\omega}\|_\mbbU \leq {M_\omega\over \gamma_{h,\omega}}\|u_\lambda\|_\mbbU
\quad\mbox{ and }\quad
\|u_\lambda - u_{h,\lambda,\omega}\|_\mbbU \leq {M_\omega\over \gamma_{h,\omega}}\inf_{u_h\in\mbbU_h}\|u_\lambda-u_h\|_\mbbU.
\end{equation}
\end{theorem}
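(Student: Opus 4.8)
The plan is to recognize system~\eqref{eq:mixed_system} as a saddle-point (mixed) problem of standard type, and invoke the classical Babu\v{s}ka--Brezzi theory. Write $a(r_h,v_h):=(r_h,v_h)_{\mbbV,\omega}$ for the bilinear form on $\mbbV_h\times\mbbV_h$ and keep $b(w_h,v_h)$ as the off-diagonal form on $\mbbU_h\times\mbbV_h$. Then~\eqref{eq:mixed_system} has the block structure $\bigl[\begin{smallmatrix} A & B\\ B^\TT & 0\end{smallmatrix}\bigr]$, so that well-posedness follows from three ingredients: (i) $a(\cdot,\cdot)$ is bounded and coercive on the full space $\mbbV_h$ (not merely on the kernel of $B^\TT$), which is immediate since it is the restriction of the inner product $(\cdot,\cdot)_{\mbbV,\omega}$, with coercivity constant $1$ and boundedness constant $1$ in the $\|\cdot\|_{\mbbV,\omega}$ norm; (ii) the inf-sup condition on $b$, which is exactly the hypothesis~\eqref{eq:inf-sup} with constant $\gamma_{h,\omega}>0$; and (iii) the dimension count $\dim\mbbV_h\geq\dim\mbbU_h$, which together with~\eqref{eq:inf-sup} (injectivity of $w_h\mapsto b(w_h,\cdot)$) guarantees the surjectivity needed. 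Since the finite-dimensional problem is square once assembled, existence and uniqueness are equivalent, and it suffices to check that the only solution of the homogeneous system ($\ell_\lambda=0$) is the trivial one: taking $v_h=r_h$ in~\eqref{eq:mixed_system_a} and $w_h=u_h$ in~\eqref{eq:mixed_system_b} gives $\|r_h\|_{\mbbV,\omega}^2=0$, hence $r_h=0$, and then~\eqref{eq:mixed_system_a} forces $b(u_h,v_h)=0$ for all $v_h\in\mbbV_h$, so $u_h=0$ by~\eqref{eq:inf-sup}.

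For the a~priori bounds, I would exploit the equivalence of~\eqref{eq:mixed_system} with a constrained minimal-residual (least-squares) formulation, as announced in the text: $u_{h,\lambda,\omega}$ minimizes $v_h\mapsto\sup_{v_h}\,|\ell_\lambda(v_h)-b(u_h,v_h)|/\|v_h\|_{\mbbV,\omega}$ over $\mbbU_h$, with $r_{h,\lambda,\omega}$ the Riesz representative of the residual. Concretely, from~\eqref{eq:mixed_system_a} with $v_h\in\mbbV_h$ and the Galerkin orthogonality $b(u_h,\cdot)=\ell_\lambda(\cdot)-(r_h,\cdot)_{\mbbV,\omega}$ on $\mbbV_h$, I would apply~\eqref{eq:inf-sup} to $u_{h,\lambda,\omega}$:
\begin{equation*}
\gamma_{h,\omega}\|u_{h,\lambda,\omega}\|_\mbbU
\leq \sup_{v_h\in\mbbV_h}\frac{|b(u_{h,\lambda,\omega},v_h)|}{\|v_h\|_{\mbbV,\omega}}
= \sup_{v_h\in\mbbV_h}\frac{|\ell_\lambda(v_h)-(r_h,v_h)_{\mbbV,\omega}|}{\|v_h\|_{\mbbV,\omega}}.
\end{equation*}
Using $\ell_\lambda(v_h)=b(u_\lambda,v_h)$ (since $u_\lambda$ solves~\eqref{eq:VarFor_gen_prob} and $\mbbV_h\subset\mbbV$) and that $r_h$ is $(\cdot,\cdot)_{\mbbV,\omega}$-orthogonal to the range of the residual... actually more cleanly: test~\eqref{eq:mixed_system_a} with $v_h=r_h$ to get $\|r_h\|_{\mbbV,\omega}^2=\ell_\lambda(r_h)-b(u_{h,\lambda,\omega},r_h)=b(u_\lambda-u_{h,\lambda,\omega},r_h)$, combined with $b(u_h,r_h)=0$ from~\eqref{eq:mixed_system_b} for all $u_h\in\mbbU_h$. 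Then one shows $\|r_h\|_{\mbbV,\omega}\leq M_\omega\inf_{u_h\in\mbbU_h}\|u_\lambda-u_h\|_\mbbU$ via~\eqref{eq:B_constants}, and finally feeds this back to bound $\|u_\lambda-u_{h,\lambda,\omega}\|_\mbbU$ by a triangle inequality $\|u_\lambda-u_{h,\lambda,\omega}\|_\mbbU\leq\|u_\lambda-u_h\|_\mbbU+\|u_h-u_{h,\lambda,\omega}\|_\mbbU$ and applying~\eqref{eq:inf-sup} to $u_h-u_{h,\lambda,\omega}\in\mbbU_h$, whose image under $b(\cdot,v_h)$ reduces, after subtracting the exact equation, to $-(r_h,v_h)_{\mbbV,\omega}$ plus a best-approximation term, giving the stated $M_\omega/\gamma_{h,\omega}$ quasi-optimality constant. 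The stability bound $\|u_{h,\lambda,\omega}\|_\mbbU\leq(M_\omega/\gamma_{h,\omega})\|u_\lambda\|_\mbbU$ then follows by taking $u_h=0$ (or directly from the displayed chain with $r_h$ bounded by $M_\omega\|u_\lambda\|_\mbbU$).

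The main obstacle I anticipate is purely bookkeeping: correctly tracking how the residual representative $r_h$ enters the error identity, and making sure the orthogonality~\eqref{eq:mixed_system_b} is used at the right place so that the best-approximation infimum appears with the clean constant $M_\omega/\gamma_{h,\omega}$ rather than something like $1+M_\omega/\gamma_{h,\omega}$. There is no deep difficulty — the inf-sup hypothesis~\eqref{eq:inf-sup} does all the heavy lifting, and the coercivity of the $(\cdot,\cdot)_{\mbbV,\omega}$-block is free — but the cleanest route is arguably to first state and prove the equivalence of~\eqref{eq:mixed_system} with the Petrov--Galerkin problem "find $u_h\in\mbbU_h$ with $b(u_h,Tv_h)=\ell_\lambda(Tv_h)$" for the discrete trial-to-test operator $T$, and then quote the standard Petrov--Galerkin quasi-optimality (C\'ea-type) estimate with constant $M_\omega/\gamma_{h,\omega}$; that bypasses the explicit juggling of $r_h$ entirely.
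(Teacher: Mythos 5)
Your existence--uniqueness argument is sound and is essentially the paper's: the system \eqref{eq:mixed_system} is a saddle-point problem whose top-left block is the inner product $(\cdot,\cdot)_{\mbbV,\omega}$ (coercive on all of $\mbbV_h$) and whose off-diagonal form satisfies \eqref{eq:inf-sup}; your direct check that the homogeneous square system has only the trivial solution is a correct elementary substitute for the paper's citation of the standard saddle-point theorem.

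The a~priori estimates are where there is a genuine gap: the manipulations you sketch do not produce the constants stated in \eqref{eq:apriori_estimates}. Following your chain, $\gamma_{h,\omega}\|u_{h,\lambda,\omega}\|_\mbbU \le \sup_{v_h}|\ell_\lambda(v_h)-(r_{h,\lambda,\omega},v_h)_{\mbbV,\omega}|/\|v_h\|_{\mbbV,\omega}$ forces a triangle inequality, and with $\|r_{h,\lambda,\omega}\|_{\mbbV,\omega}\le M_\omega\|u_\lambda\|_\mbbU$ you land at $2M_\omega/\gamma_{h,\omega}$, not $M_\omega/\gamma_{h,\omega}$; likewise your error argument via $\|u_\lambda-u_{h,\lambda,\omega}\|_\mbbU\le\|u_\lambda-w_h\|_\mbbU+\|w_h-u_{h,\lambda,\omega}\|_\mbbU$ yields a constant of the form $1+2M_\omega/\gamma_{h,\omega}$ (at best $1+M_\omega/\gamma_{h,\omega}$). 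Your fallback --- ``quote the standard Petrov--Galerkin quasi-optimality with constant $M_\omega/\gamma_{h,\omega}$'' --- mis-states that result: Babu\v{s}ka's theorem gives $1+M_\omega/\gamma_{h,\omega}$, and removing the $1$ in the Hilbert setting is precisely the nontrivial point (the Xu--Zikatanov sharpening), which is what the paper actually proves. Concretely, the paper (i) obtains the stability bound without any triangle inequality by testing with the supremizer $v_{h,\lambda,\omega}=T_{h,\omega}u_{h,\lambda,\omega}$ and exploiting the orthogonality $(v_{h,\lambda,\omega},r_{h,\lambda,\omega})_{\mbbV,\omega}=0$, which follows from \eqref{eq:mixed_system_b}, so that $(r_{h,\lambda,\omega}+v_{h,\lambda,\omega},v_{h,\lambda,\omega})_{\mbbV,\omega}=\ell_\lambda(v_{h,\lambda,\omega})=b(u_\lambda,v_{h,\lambda,\omega})$; and (ii) obtains the quasi-optimality with the clean constant by introducing the solution projector $P:\mbbU\to\mbbU_h$ of the mixed system, bounding $\|P\|\le M_\omega/\gamma_{h,\omega}$ via (i), and invoking Kato's identity $\|I-P\|=\|P\|$ for Hilbert-space projectors \cite{KatNM1960}. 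Without these two devices (or an equivalent), your argument proves only weaker estimates than the ones claimed in the theorem; you flagged this risk yourself but did not close it.
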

\begin{proof}
See Appendix~\ref{sec:proof}.
\end{proof}
\begin{remark}
It is straightforward to see, using the equivalences of norms~\eqref{eq:norm_equiv}, that having the discrete inf-sup condition in one weighted norm $\|\cdot\|_{\mbbV,\omega}$ is fully equivalent to have the discrete inf-sup condition in the original norm of $\mbbV$, and also to have the discrete inf-sup condition in another weighted norm linked to the family of weights $\mathcal W$. If~\eqref{eq:inf-sup} holds true for any weight of the family $\mathcal W$ (or for the original norm of $\mbbV$) we say that the the discrete pairing $\mbbU_h$-$\mbbV_h$ is compatible.
\end{remark}
\begin{remark}[Influence of the weight]
In general, to make the weight $\omega\in\mathcal W$ influence the  mixed system~\eqref{eq:mixed_system}, we need $\dim{\mbbV_h} > \dim\mbbU_h$. In fact, the case $\dim{\mbbV_h} = \dim\mbbU_h$ is not interesting because equation~\eqref{eq:mixed_system_b} becomes a square system and one would obtain $r_{h,\lambda,\omega}=0$ from it, thus recovering a standard Petrov-Galerkin method without any influence of~$\omega$.
\end{remark}
\subsubsection{Equivalent Petrov-Galerkin formulation}\label{sec:PGequiv}
For any weight $\omega\in\mathcal W$, consider the trial-to-test operator $T_\omega:\mbbU\to \mbbV$ such that:
\begin{equation}\label{eq:T_omega}
(T_\omega u,v)_{\mbbV,\omega}= b(u,v)\,,\qquad \forall u \in \mbbU,\, \forall v\in\mbbV.
\end{equation}
Observe that for any $u\in \mbbU$, the vector $T_\omega u\in\mbbV$ is nothing but the Riesz representative of the functional $b(u,\cdot)\in \mbbV^*$ under the weighted inner-product $(\cdot,\cdot)_{\mbbV,\omega}\,$. 

Given a discrete subspace $\mbbU_h\subset \mbbU$, the optimal test space paired with $\mbbU_h$, is defined as $T_\omega\mbbU_h\subset \mbbV$. 
The concept of optimal test space was introduced by~\cite{demkowicz2010class} and its main advantage is that the discrete pairing 
$\mbbU_h$-$T_\omega\mbbU_h$ (with equal dimensions) satisfies automatically the inf-sup condition~\eqref{eq:inf-sup}, with inf-sup constant $\gamma_\omega >0$, inherited from the stability at the continuous level (see eq.~\eqref{eq:B_constants}). 

Of course, equation~\eqref{eq:T_omega} is infinite dimensional and thus not solvable in the general case. Instead, having the discrete finite-dimensional subspace $\mbbV_h\subset \mbbV$, we can define the discrete trial-to-test operator 
$T_{h,\omega}:\mbbU\to \mbbV_h$ such that:
\begin{alignat}{2}\label{eq:T_h,omega}
(T_{h,\omega} u,v_h)_{\mbbV,\omega}= b(u,v_h)\,, \quad\forall u\in\mbbU,\,\forall v_h\in\mbbV_h.
\end{alignat}
Observe now that the vector $T_{h,\omega} u\in\mbbV_h$ corresponds to the orthogonal projection of $T_\omega u$ into the space $\mbbV_h$, by means of the weighted inner-product $(\cdot,\cdot)_{\mbbV,\omega}$.  This motivates the definition of the {\it projected optimal test space} (of the same dimension of $\mbbU_h$) as $\mbbV_{h,\omega}:=T_{h,\omega}\mbbU_h$ (cf.~\cite{BroSteCAMWA2014}). It can be proven that if the discrete pairing $\mbbU_h$-$\mbbV_h$ satisfies the inf-sup condition~\eqref{eq:inf-sup}, then the discrete pairing $\mbbU_h$-$\mbbV_{h,\omega}$ also satisfies the inf-sup conditon~\eqref{eq:inf-sup}, with the same inf-sup constant $\gamma_{h,\omega}>0$.
Moreover, the solution $u_{h,\lambda,\omega}\in \mbbU_h$ of the mixed system~\eqref{eq:mixed_system} is also the unique solution of the well-posed Petrov-Galerkin scheme:
\begin{equation}\label{eq:PG}
b(u_{h,\lambda,\omega},v_h)=\ell_\lambda(v_h),\qquad\forall v_h\in\mbbV_{h,\omega}\,.
\end{equation}
Indeed, from equation~\eqref{eq:mixed_system_b}, for any $v_h=T_{h,\omega}w_h\in\mbbV_{h,\omega}\subset\mbbV_h$, we obtain that
$$(r_{h,\lambda,\omega},v_h)_{\mbbV,\omega}=(r_{h,\lambda,\omega},T_{h,\omega}w_h)_{\mbbV,\omega}=b(w_h,r_{h,\lambda,\omega})=0,$$
which upon being replaced in equation~\eqref{eq:mixed_system_a} of the mixed system gives~\eqref{eq:PG}. We refer to~\cite[Proposition~2.2]{BroSteCAMWA2014} for further details.  


\subsubsection{Equivalent Minimal Residual formulation}\label{sec:MinRes}
Let $\mbbU_h\subset \mbbU$ and $\mbbV_h\subset\mbbV$ as in Theorem~\ref{thm:discrete_wellposedness}, and consider the following discrete-dual residual minimization:
\begin{equation}\notag
 u_{h,\lambda,\omega}=\argmin_{w_h\in \mbbU_h}\|\ell_\lambda(\cdot)-b(w_h,\cdot)\|_{(\mbbV_h)^*_\omega}\,\,\,,\quad\mbox{where } \|\cdot\|_{(\mbbV_h)^*_\omega}:=\sup_{v_h\in\mbbV_h} {|\left<\,\cdot\,,v_h\right>_{\mbbV^*,\mbbV}|\over \|v_h\|_{\mbbV,\omega}}.
\end{equation}
Let $R_{\omega,\mbbV_h}:\mbbV_h\to (\mbbV_h)^*$ be the Riesz map (isometry) linked to the weighted inner-product $(\cdot,\cdot)_{\mbbV,\omega}$, that is: 
$$
\left<R_{\omega,\mbbV_h}v_h\,,\,\cdot\,\right>_{(\mbbV_h)^*,\mbbV_h}=(v_h\,,\,\cdot\, )_{\mbbV,\omega} \,,\quad\forall v_h\in\mbbV_h.
$$
Defining the minimal residual representative
$r_{h,\lambda,\omega}:=R^{-1}_{\omega,\mbbV_h}(\ell_\lambda(\cdot)-b(u_{h,\lambda,\omega},\cdot))$, we observe that the couple $(r_{h,\lambda,\omega},u_{h,\lambda,\omega})\in\mbbV_h\times\mbbU_h$ is the solution of the mixed system~\eqref{eq:mixed_system}. Indeed,  $r_{h,\lambda,\omega}\in\mbbV_h$ satisfy:
\begin{equation}\notag
(r_{h,\lambda,\omega},v_h)_{\mbbV,\omega}= \ell_\lambda(v_h)-b(u_{h,\lambda,\omega},v_h)\,,\quad\forall v_h\in\mbbV_h\,,
\end{equation}
which is nothing but equation~\eqref{eq:mixed_system_a} of the mixed system.
On the other hand, using the isometric property of $R_{\omega,\mbbV_h}$ we have:
$$
u_{h,\lambda,\omega}=\argmin_{w_h\in \mbbU_h}\left\|\ell_\lambda(\cdot)-b(w_h,\cdot)\right\|^2_{(\mbbV_h)^*_\omega}
=\argmin_{w_h\in \mbbU_h}\left\|R^{-1}_{\omega,\mbbV_h}(\ell_\lambda(\cdot)-b(w_h,\cdot))\right\|^2_{\mbbV,\omega}\,.
$$
Differentiating the norm $\|\cdot\|_{\mbbV,\omega}$ and using first-order optimality conditions we obtain:
$$
0 = \left(R^{-1}_{\omega,\mbbV_h}(\ell_\lambda(\cdot)-b(u_{h,\lambda,\omega},\cdot)),R^{-1}_{\omega,\mbbV_h}b(w_h,\cdot)\right)_{\mbbV,\omega}=b(w_h,r_{h,\lambda,\omega}),\quad
\forall w_h\in \mbbU_h\,,
$$
which gives equation~\eqref{eq:mixed_system_b}. 

\section{Implementational details}
\label{sec:implement}

\subsection{Artificial Neural Networks}\label{sec:ANN}

\noindent Roughly speaking, an artificial neural network is obtained from compositions and superpositions of a single, simple nonlinear activation or response function \cite{cybenko1989approx}. Namely, given an input $x_{\mathrm{in}}\in\mbbR^d$ and an activation function $\sigma$, an artificial neural network looks like:
\begin{equation}
\label{eq:general_ANN}
\ANN(x_{\mathrm{in}}) = \Theta_n\sigma(\cdots\sigma( \Theta_2\sigma(\Theta_1 x_{\mathrm{in}} + \phi_1) + \phi_2)\cdots) + \phi_n,
\end{equation}
where $\{\Theta_j\}_{j=1}^n$ are matrices (of different size) and $\{\phi_j\}_{j=1}^n$ are vectors (of different length) of coefficients to be determined by a ``training'' procedure. Depending on the application, an extra activation function can be added at the end. A classical activation function is the logistic sigmoid function:
\begin{equation}\label{eq:sigmoid}
\sigma(x) = \frac{1}{1+ e^{-x}}\,.
\end{equation}
Other common activation functions used in artificial neural network applications are the rectified linear unit (ReLU), the leaky ReLU, and the hyperbolic tangent (see, e.g.\cite{chien2018source, tsihrintzis2019machine}).

 The process of training an artificial neural network as  (\ref{eq:general_ANN}) is performed by the minimization of a given functional $J(\Theta_1,\phi_1,\Theta_2,\phi_2,\dots,\Theta_n,\phi_n)$. We search for optimal sets of parameters $\{\Theta_j^{*}\}_{j=1}^n$ and $\{\phi_j^{*}\}_{j=1}^n$ minimizing the cost functional $J$. For simplicity, in what follows we will denote all the parameters of an artificial neural network by $\theta\in\Phi$, for a given set $\Phi$ of admissible parameters.
A standard cost functional is constructed with a sample training set of known values $\{x_1,x_2,\dots, x_{N_s}\}$ and its corresponding labels $\{y_1,y_2,\dots,y_{N_s}\}$ as follows:
\begin{equation}
\nonumber
J(\theta) = \frac{1}{2} \sum_{i=1}^{N_s} \left( y_i - F(\ANN(x_i;\theta) )\right)^2,
\end{equation}   
(for some real function $F$) 
which is known as supervised learning \cite{goodfellow2016deep}.
Training an artificial neural network means to solve the following minimization problem:
\begin{equation}
\label{eq:argmin}
\theta^*=\argmin_{\theta\in\Phi} J(\theta).
\end{equation}
Thus, the artificial neural network evaluated in the optimal $\theta^{*}$ (i.e., $\ANN(x;\theta^{*})$) is the trained network. There are many sophisticated tailor-made procedures to perform the minimization 
in~\eqref{eq:argmin} efficiently. The reader may refer to~\cite{sra2011optimization} for inquiring into this topic, which is out of the scope of this paper.


\subsection{Offline procedures}\label{sec:offline}

The first step is to choose an artificial neural network $\ANN(\cdot\,; \theta)$ that will define a family $\mathcal W$ of positive weight-functions to be used in the weighted inner products $\{(\cdot,\cdot)_{\mbbV,\omega}\}_{\omega\in\mathcal W}$. Typically we have:
$$
\mathcal W=\{\omega(\cdot)=g(\ANN(\cdot\,; \theta)): \theta\in \Phi\}, 
$$
where $g$ is a suitable positive an bounded continuous function.

Next, given a discrete trial-test pairing $\mbbU_h$-$\mbbV_h$ satisfying~\eqref{eq:inf-sup}, we construct the map $\mathcal W\times\Lambda\ni(\omega,\lambda)\mapsto q(u_{h,\lambda,\omega})\in\mbbR$, 
where $u_{h,\lambda,\omega}\in\mbbU_h$ is the second component of the solution the mixed system~\eqref{eq:mixed_system}. 
%
%
Having coded this map, we proceed to {\it train} the $\ANN$ by computing:
\begin{equation}
\label{eq:eqmin_prob_mixed}
\left\{
\begin{array}{l}
\theta^*=\displaystyle \argmin_{\theta\in\Phi}  \displaystyle {1\over 2}\sum_{i=1}^n\left| q(u_{h,\lambda_i,\omega})-q(u_{\lambda_i})\right |^2,\\
\mbox{s.t.} 
\left\{
\begin{array}{lll}
 \omega(\cdot) = g(\ANN(\cdot\,;\theta))\\
 (r_{h,\lambda_i,\omega},v_h)_{\mbbV,\omega} + b(u_{h,\lambda_i,\omega},v_h) & = \ell_{\lambda_i}(v_h), & \forall\, v_h\in\mbbV_h,\\
b(w_h,r_{h,\lambda_i,\omega}) & =0,  & \forall\, w_h\in \mbbU_h.
\end{array}
\right.
\end{array}
\right.
\end{equation}

The last step is to build the matrices of the linear system needed for the online phase. Denote the basis of $\mbbU_h$ by $\{\psi_1,...,\psi_n\}$,  and the basis of $\mbbV_h$ by $\{\varphi_1,...,\varphi_m\}$ 
 (recall that $m>n$).
Having $\theta^*\in\Phi$ approaching~\eqref{eq:eqmin_prob_mixed}, we extract from the mixed system~\eqref{eq:mixed_system} the matrices $A\in \mbbR^{m\times m}$ and $B\in \mbbR^{m\times n}$ such that:
$$
A_{ij} = (\varphi_j,\varphi_i)_{\mbbV,\omega^*}\qquad\mbox{and}\qquad  
B_{ij} = b(\psi_j,\varphi_i),
$$
where $\omega^*(\cdot)=g(\ANN(\cdot\,;\theta^*))$.
Finally, we store the matrices $B^TA^{-1}B\in\mbbR^{n\times n}$ and $B^TA^{-1}\in\mbbR^{n\times m}$ to be used in the online phase to compute directly $u_{h,\lambda,\omega^*}\in \mbbU_h$ for any right hand side $\ell_\lambda\in\mbbV^*$.
Basically, we have condensed-out the residual variable of the mixed system~\eqref{eq:mixed_system}, since it is useless for the application of the quantity of interest $q\in\mbbU^*$. In addition, it will be also important to store the vector $Q\in\mbbR^n$ such that: 
$$
Q_j := q(\psi_j)\,,\quad j=1,...,n.
$$
\subsection{Online procedures}\label{sec:online}
For each $\lambda\in\Lambda$ for which we want to obtain the quantity of interest $q(u_{h,\lambda,\omega^*})$, we first compute the vector $L_\lambda\in  \mbbR^{m}$ such that its $i$-th component is given by:
$$
(L_\lambda)_i = \left<\ell_\lambda,\varphi_i\right>_{\mbbV^*,\mbbV}\,,
$$
where $\varphi_i$ is the $i$-th vector of in the basis of $\mbbV_h$. Next, we compute
$$
q(u_{h,\lambda,\omega^*})=Q^T(B^TA^{-1}B)^{-1}B^TA^{-1}L_\lambda.
$$
Observe that the matrix $Q^T(B^TA^{-1}B)^{-1}B^TA^{-1}$ can be fully obtained and stored from the previous offline phase (see Section~\ref{sec:offline}).
\section{Numerical tests}
\label{sec:numTest}

In this section, we show some numerical examples in 1D and 2D to investigate the main features of the proposed data-driven finite element method. In particular, we consider in the following order: 1D diffusion, 1D advection, 1D advection with multiple QoIs, and finally 2D diffusion. 

\subsection{1D diffusion with one QoI}\label{sec:1D_diff}
We recover here the motivational example from the introduction (see Section~\ref{sec:motivation}).
Consider the variational formulation~\eqref{eq:intro_model}, with trial and test spaces $\mbbU=\mbbV=H_{(0}^{1}(0,1)$. We endowed $\mbbV$ with the weighted inner product: 
$$
(v_1,v_2)_{\mbbV,\omega}:=\int_0^1 \omega \, v_1' v_2'\,,\qquad\forall v_1,v_2\in\mbbV.
$$
As in the introduction, we consider the simplest coarse discrete trial space $\mbbU_h:=\Span\{\psi\}\subset\mbbU$, where $\psi(x)=x$.
The optimal test function (see Section~\ref{sec:PGequiv}), paired with the trial function $\psi$, is 
given by $\varphi:=T_\omega\psi\in\mbbV$, which is the solution of~\eqref{eq:T_omega} with $u=\psi$. Hence,
\begin{equation}\label{eq:optimal_test}
\varphi(x)=\int_0^x {1\over\omega(s)}ds.
\end{equation}
%

Let us consider the Petrov-Galerkin formulation with \emph{optimal} test functions, which is equivalent to the 
mixed system~\eqref{eq:mixed_system} in the optimal case $\mbbV_h=\mbbV$. 
Consequently, the Petrov-Galerkin scheme with trial function $\psi$ and optimal test function $\varphi$, delivers the discrete solution
$u_{h,\lambda,\omega}(x)=x\varphi(\lambda)/\varphi(1)$ (notice that the trivial weight $\omega\equiv 1$ recovers the test function $\varphi=\psi$, and therefore the standard Galerkin approach).

Recalling the exact solution~\eqref{eq:exact}, we observe that the relative error in the quantity of interest for our Petrov-Galerkin approach is:
\begin{equation}\label{eq:rel_error}
\mbox{Err} = \left\{
\begin{array}{ll}
\left|1- {\varphi(\lambda)\over\varphi(1)}\right| & \mbox{if } \xnot\leq\lambda,\\
\left|1 - {\xnot\over\lambda} {\varphi(\lambda)\over\varphi(1)}\right| & \mbox{if } \xnot\geq\lambda.
\end{array}\right.
\end{equation}
Of course, any function such that $\varphi(\lambda)=\varphi(\xnot)\neq 0$ for $\lambda\geq \xnot$, and 
$\varphi(\lambda)=\lambda \varphi(\xnot)/\xnot$ for $\lambda\leq \xnot$, will produce \emph{zero error for all $\lambda\in(0,1)$}.
Notice that such a function indeed exists, and in this one-dimensional setting it solves the adjoint problem:
\begin{equation}
\notag
\left\{
\begin{array}{l}
\text{Find } z\in H_{(0}^{1}(0,1) \text{ such that:} \\
\displaystyle \int_{0}^{1} w'z' = w(\xnot), \qquad\forall w\in H_{(0}^{1}(0,1). 
\end{array}
\right.
\end{equation}
This optimal test function is also obtained in our framework via~\eqref{eq:optimal_test}, by using a limiting weight of the form:
\begin{equation}\label{eq:weight}
\omega(x) \rightarrow \left\{
\begin{array}{cl}
c & \mbox{if } x<\xnot,\\
+\infty & \mbox{if } x>\xnot,
\end{array}\right.
\end{equation}
for some constant $c>0$. Hence, the Petrov--Galerkin method using a test function of the form~\eqref{eq:optimal_test} has sufficient variability to eliminate any errors for any~$\lambda$!

We now restrict the variability by parametrizing $\omega$. In the motivating example given in Section~\ref{sec:motivation}, for illustration reasons we chose a weight of the form $\omega(x)=\sigma(\theta_1x+\theta_2)$, which corresponds to the most simple artificial neural network, having only one hidden layer with one neuron. We now select a slightly more complex family of weights having the form $\omega(x)=\exp( \ANN(x;\theta))>0$, where
\begin{equation}\label{eq:ANN1D}
\mathrm{ANN}(x;\theta) = \sum_{j=1}^{5} \theta_{j3} \sigma(\theta_{j1}x + \theta_{j2}).
\end{equation}
Observe that $\ANN(x;\theta)$ corresponds to an artificial neural network of one hidden layer with five neurons (see Section~\ref{sec:ANN}). 
\par
The training set of parameters has been chosen as $\lambda_i=0.1i$, with $i=1,...,9$.
For comparisons, we perform three different experiments. The first experiment trains the network~\eqref{eq:ANN1D} based on a cost functional that uses the relative error formula~\eqref{eq:rel_error}, where the optimal test function $\varphi$ is computed using eq.~\eqref{eq:optimal_test}. The other two experiments use the training approach~\eqref{eq:eqmin_prob_mixed}, with discrete spaces $\mbbV_h$ consisting of conforming piecewise linear functions over uniform meshes of 4 and 16 elements respectively. The quantity of interest has been set to $\xnot=0.6$, which does not coincide with a node of the discrete test spaces. Figure~\ref{fig:1d_diffusion_solutions} shows the obtained discrete solutions 
$u_{h,\lambda,\omega^*}$ for each experiment, and for two different values of $\lambda$.
Figure~\ref{fig:1d_diffusion_weight_and_error_a} shows the trained weight obtained for each experiment (cf.~eq.~\eqref{eq:weight}), while Figure~\ref{fig:1d_diffusion_weight_and_error_b} depicted the associated optimal and projected-optimal test functions linked to those trained weights. Finally, 
Figure~\ref{fig:1d_diffusion_weight_and_error_c} shows the relative errors in the quantity of interest for each discrete solution in terms of the $\lambda$ parameter. 
\par
It can be observed that the trained method using a parametrized weight function based on~\eqref{eq:ANN1D}, while consisting of only one degree of freedom, gives quite accurate quantities of interest for the entire range of~$\lambda$. This should be compared to the $O(1)$ error for standard Galerkin given by~\eqref{eq:Galerkin_error}. We note that some variation can be observed depending on whether the optimal or a projected optimal test function is used (with a richer~$\mbbV_h$ being better).
\begin{figure}[!t]
\begin{center}
  \begin{subfigure}[b]{0.32\textwidth}
    \includegraphics[width=\textwidth, height=\textwidth]{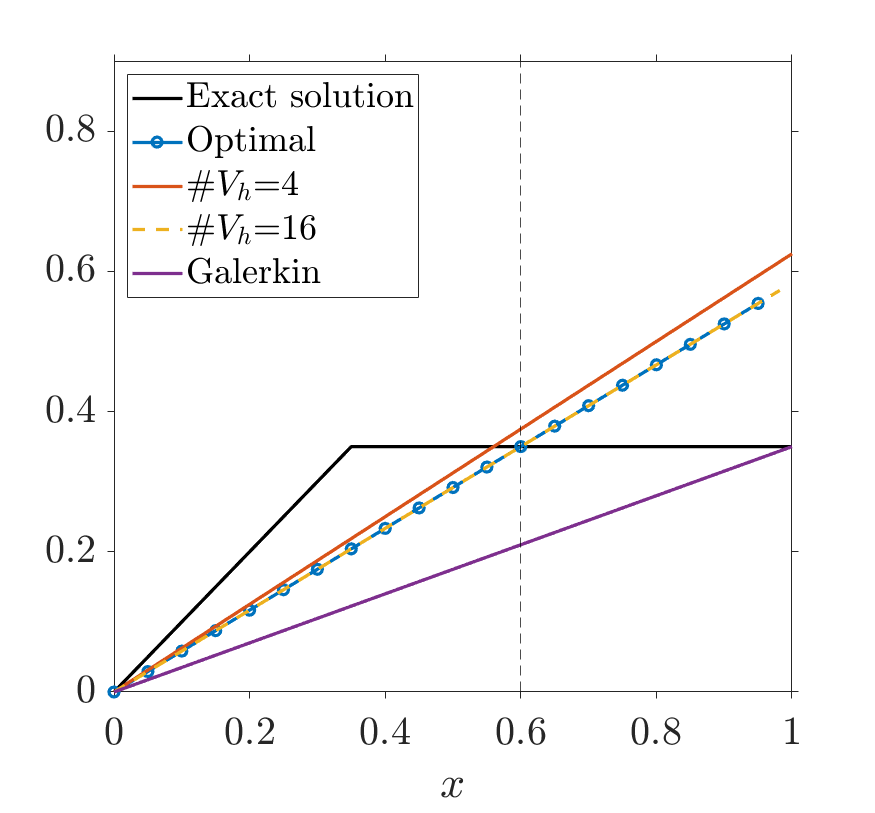}
    \caption{$\lambda = 0.35$}
  \end{subfigure}
  \hspace{1cm}
  \begin{subfigure}[b]{0.32\textwidth}
    \includegraphics[width=\textwidth, height=\textwidth]{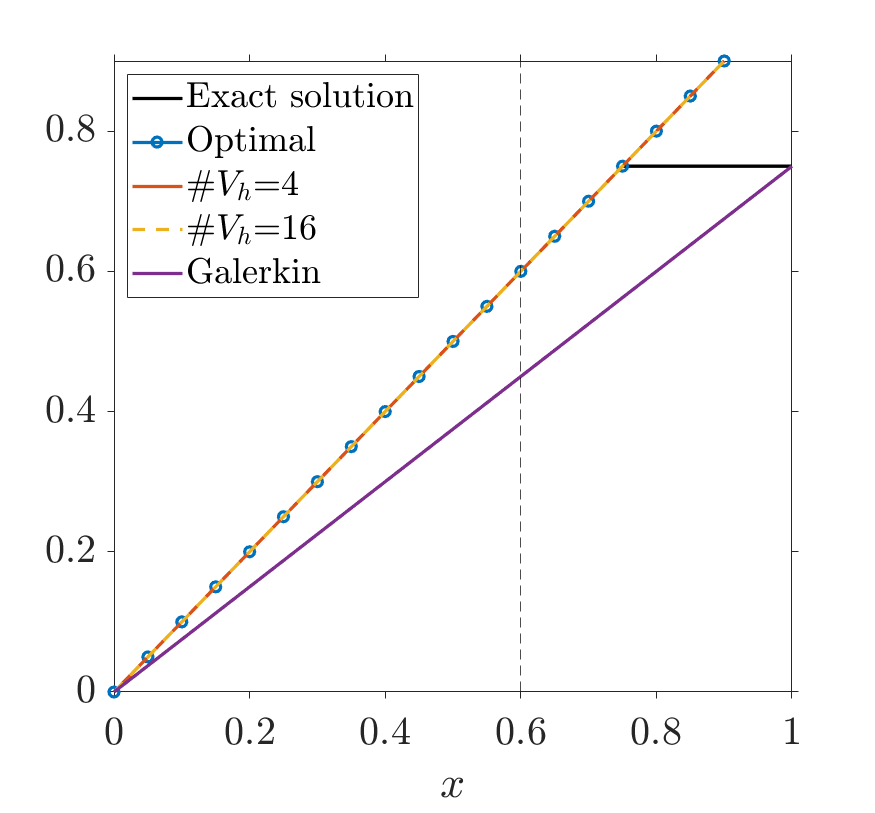}
    \caption{$\lambda = 0.75$}
  \end{subfigure}
\end{center}  
  \caption{Discrete solutions computed using the optimal test function approach (blue line), and discrete mixed form approach~\eqref{eq:mixed_system} with different discrete test spaces $\mbbV_h$ (red and yellow lines). Dotted line shows the QoI location.}
  \label{fig:1d_diffusion_solutions}
\end{figure}
\begin{figure}[!t]
\begin{center}
  \begin{subfigure}[b]{0.32\textwidth}
    \includegraphics[width=\textwidth, height=\textwidth]{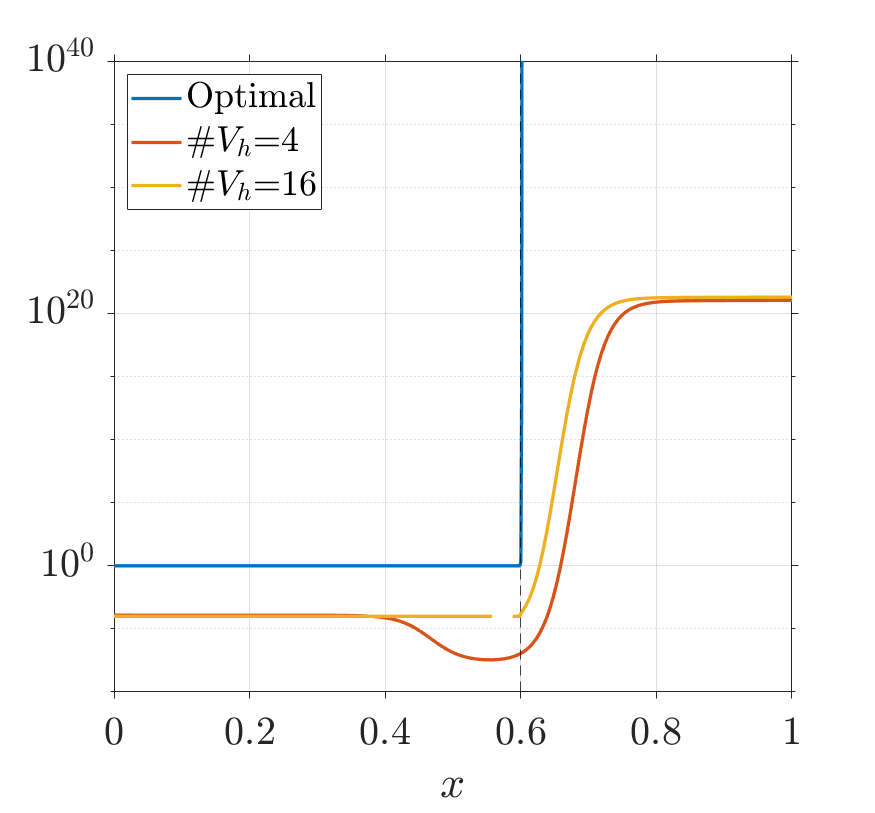}
    \caption{Trained weights}
    \label{fig:1d_diffusion_weight_and_error_a}
  \end{subfigure}
  \hfill
  \begin{subfigure}[b]{0.32\textwidth}
    \includegraphics[width=\textwidth, height=\textwidth]{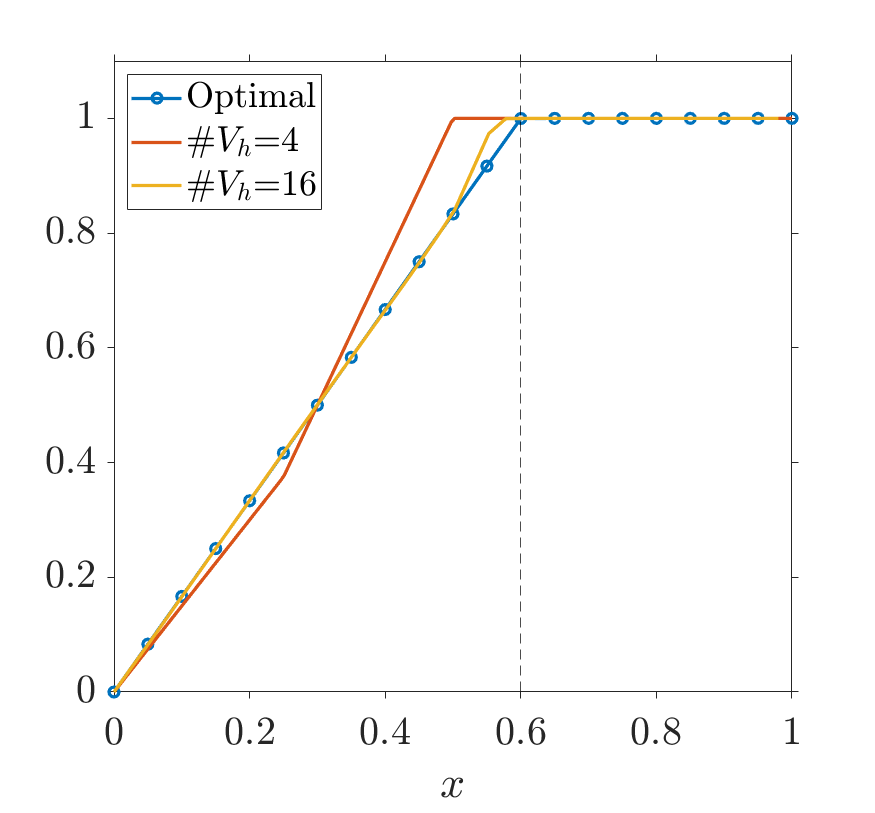}
    \caption{Optimal test functions}
    \label{fig:1d_diffusion_weight_and_error_b}
  \end{subfigure}
  \hfill
  \begin{subfigure}[b]{0.32\textwidth}
    \includegraphics[width=\textwidth, height=\textwidth]{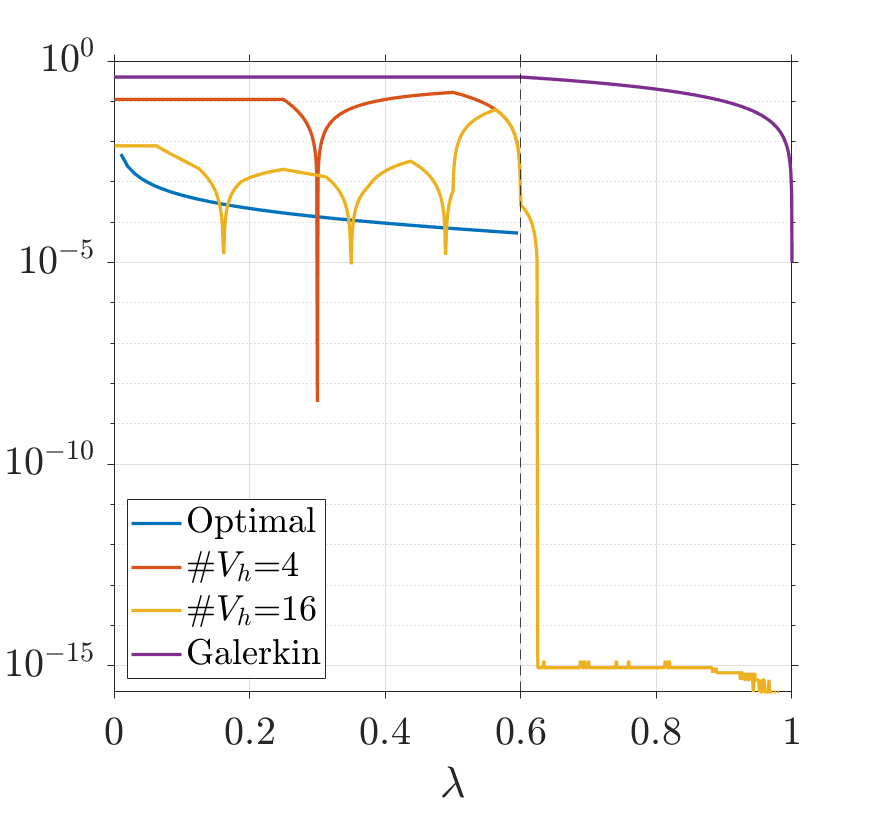}
    \caption{Relative errors in QoI}
    \label{fig:1d_diffusion_weight_and_error_c}
  \end{subfigure}
\end{center}  
  \caption{Trained weights, optimal (and projected-optimal) test functions, and relative errors computed with three different approaches. Dotted line shows the QoI location.}
  \label{fig:1d_diffusion_weight_and_error}
\end{figure}

\subsection{1D advection with one QoI}\label{sec:1D_adv}
Consider the family of ODEs:
\begin{equation}\label{eq:ode_example1}
\left\{
\begin{array}{r@{}lr}
u'&{}=f_{\lambda} & \hbox{in } (0,1),\\
u(0) &{}=0, &
\end{array}
\right.
\end{equation}
for a family of continuous functions $\{f_{\lambda}\}_{\lambda\in[0,1]}$ given by 
$f_\lambda(x):=(x-\lambda)\mathbbm{1}_{[\lambda,1]}(x)$, where $\mathbbm{1}_{[\lambda,1]}$ denote the characteristic function of the interval $[\lambda,1]$.
The exact solution of~\eqref{eq:ode_example1} will be used as a reference solution and is given by
$u_\lambda(x)={1\over 2}(x-\lambda)^2\mathbbm{1}_{[\lambda,1]}(x)$.
The quantity of interest considered for this example will be $q_{\xnot}(u_\lambda) := u_\lambda(\xnot)$, where $\xnot$ could be any value in $[0,1]$.

Let us consider the following variational formulation of problem~\eqref{eq:ode_example1}:
\begin{equation}
\nonumber
\left\{
\begin{array}{l}
\text{Find } u_\lambda\in \mbbU \text{ such that:} \\
b(u_\lambda,v):=\displaystyle\int_0^1u_\lambda'v = \int_0^1 f_{\lambda}v=:\ell_{\lambda}(v),\quad\forall v\in \mbbV, 
\end{array}
\right.
\end{equation}
where $\mbbU:= H^1_{(0}(0,1) := \{u\in L^2(0,1): u'\in L^2(0,1) \wedge u(0)=0\}$, and $\mbbV := L^2(0,1)$ 
is endowed with the weighted inner-product:
$$ (v_1,v_2)_{\mathbb V,\omega}:=\int_0^1 \omega\,v_1v_2\,, \quad\forall v_1,v_2\in\mbbV.$$
We want to approach this problem using coarse discrete trial spaces $\mathbb U_h\subset \mbbU$ of piecewise linear polynomials on a partition of one, two and three elements. 

We describe the weight $\omega(x)$ by the sigmoid of an artificial neural network that depends on parameters $\theta$, i.e., $\omega(x)= \sigma(\mathrm{ANN}(x;\theta))>0$ (see Section~\ref{sec:ANN}). In particular, we use the artificial neural network given in~\eqref{eq:ANN1D}.
%
%
%
%
To train such a network, we consider a training set $\{\lambda_i\}_{i=1}^{9}$, where $\lambda_i = 0.125(i-1)$,
together with the set of exact quantities of interest $\{q_{\xnot}(u_{\lambda_i})\}_{i=1}^{9}$, computed using the reference exact solution with $\xnot=0.9$.
%
%
The training procedure uses the constrained minimization problem~\eqref{eq:eqmin_prob_mixed}, where for each low-resolution trial space $\mbbU_h$ (based on one, two and three elements), the same discrete test space $\mbbV_h$ has been used: a high-resolution space of piecewise linear and continuous functions linked to a uniform partition of 128 elements.    
%
%
The minimization algorithm has been stopped until the cost functional reaches the tolerance \texttt{tol}$=9\cdot 10^{-7}$. 

After an optimal parameter $\theta^*$ has been found (see~\eqref{eq:eqmin_prob_mixed}), we follow the matrix procedures described in Section~\ref{sec:offline} and Section~\ref{sec:online} to approach the quantity of interest of the discrete solution for any $\lambda\in[0,1]$.

Figures~\ref{fig:1d_solutions}~and~\ref{fig:1d_error} show numerical experiments considering model problem (\ref{eq:ode_example1}) in three different trial spaces.  
Figure~\ref{fig:1d_solutions} shows, for $\lambda=0.19$, the exact solution and the Petrov-Galerkin solution computed with projected optimal test functions given by the trained weighted inner-product. Notice that for the three cases (with one, two, and three elements) the Petrov-Galerkin solution intends to approximate the quantity of interest 
(dotted line). 
\begin{figure}[!t]
\begin{subfigure}[b]{0.32\textwidth}
    \includegraphics[width=\textwidth, height=\textwidth]{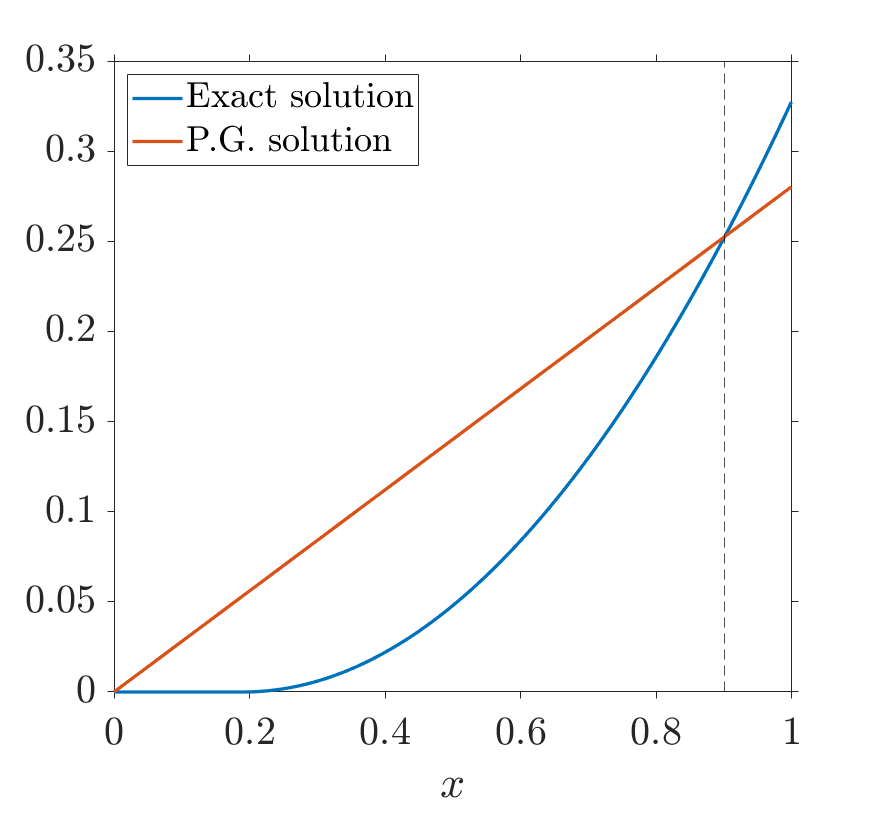}
    \caption{One element}
    \label{fig:1d_solutions_a}
  \end{subfigure}
  \hfill
  \begin{subfigure}[b]{0.32\textwidth}
    \includegraphics[width=\textwidth, height=\textwidth]{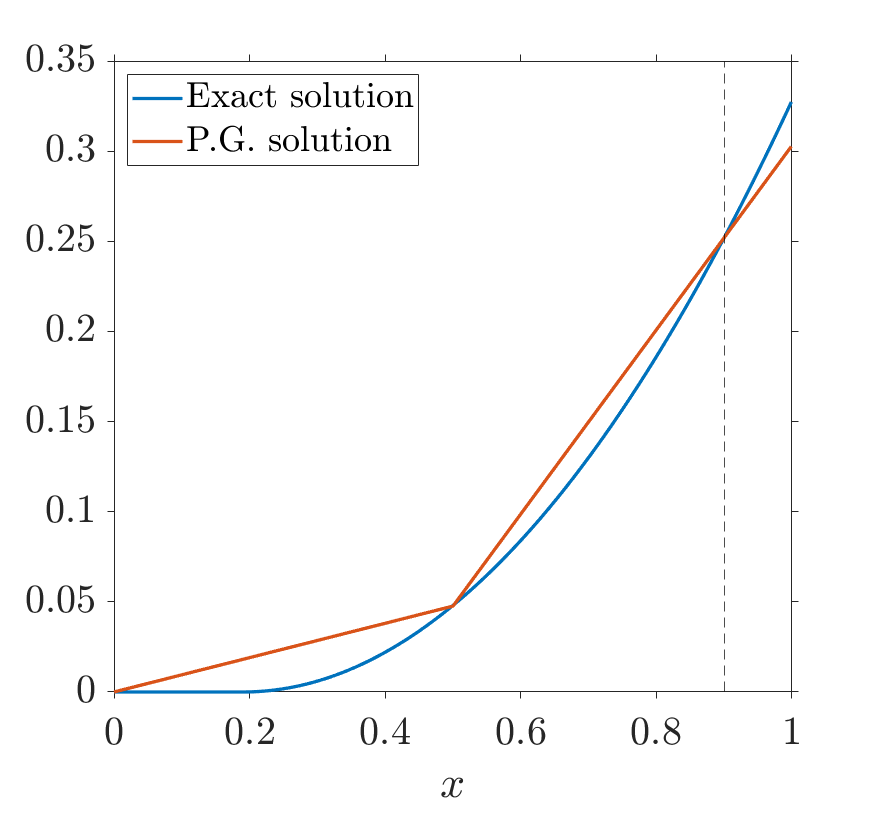}
    \caption{Two elements}
  \end{subfigure}
  \hfill
  \begin{subfigure}[b]{0.32\textwidth}
    \includegraphics[width=\textwidth, height=\textwidth]{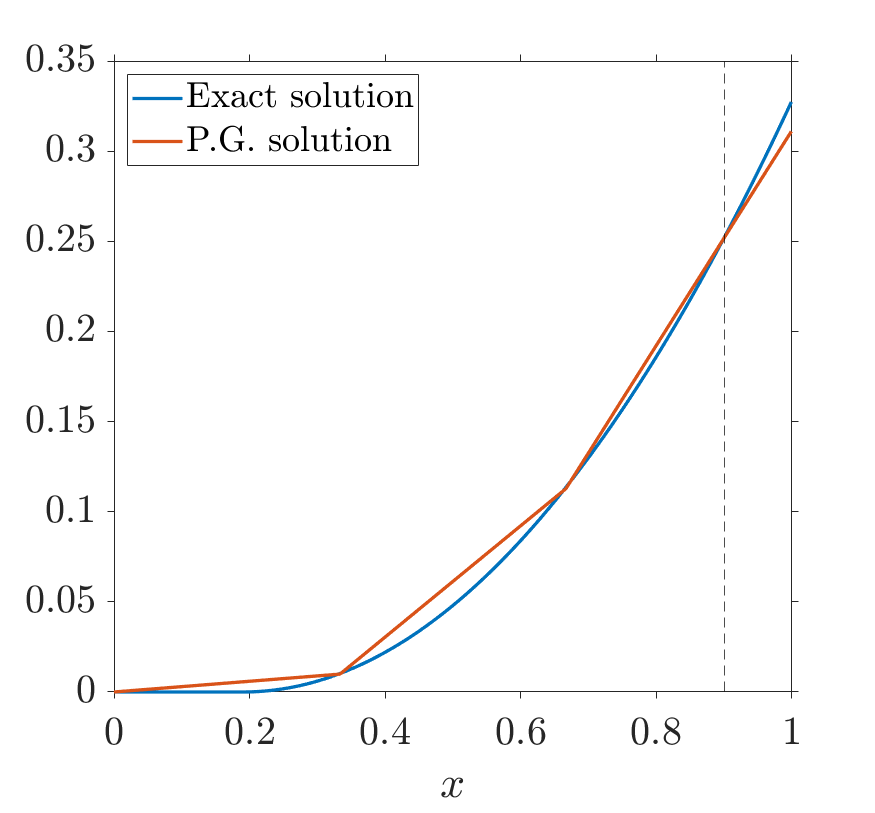}
    \caption{Three elements}
  \end{subfigure}
  \caption{Petrov-Galerkin solution with projected optimal test functions with trained weight. Dotted line shows the QoI location (0.9) and parameter value is $\lambda=0.19$.}
  \label{fig:1d_solutions}
\end{figure}

Figure~\ref{fig:1d_error} displays the QoI error $|q_{\xnot}(u_\lambda)-q_{\xnot}(u_{\lambda,h,\omega^*})|$ for different values of $\lambda\in[0,1]$. When the ANN-training stops at a cost functional smaller than \texttt{tol}$=9\cdot 10^{-7}$, the QoI error remains smaller than $10^{-3}$ for all $\lambda\in [0.1]$. 
In particular, Figure~\ref{fig:1d_error_a} shows that even in the simplest case of one-degree of freedom, it is possible to get reasonable approximations of the QoI for the entire range of~$\lambda$. 
\begin{figure}[!t]
\begin{subfigure}[b]{0.32\textwidth}
    \includegraphics[width=\textwidth, height=\textwidth]{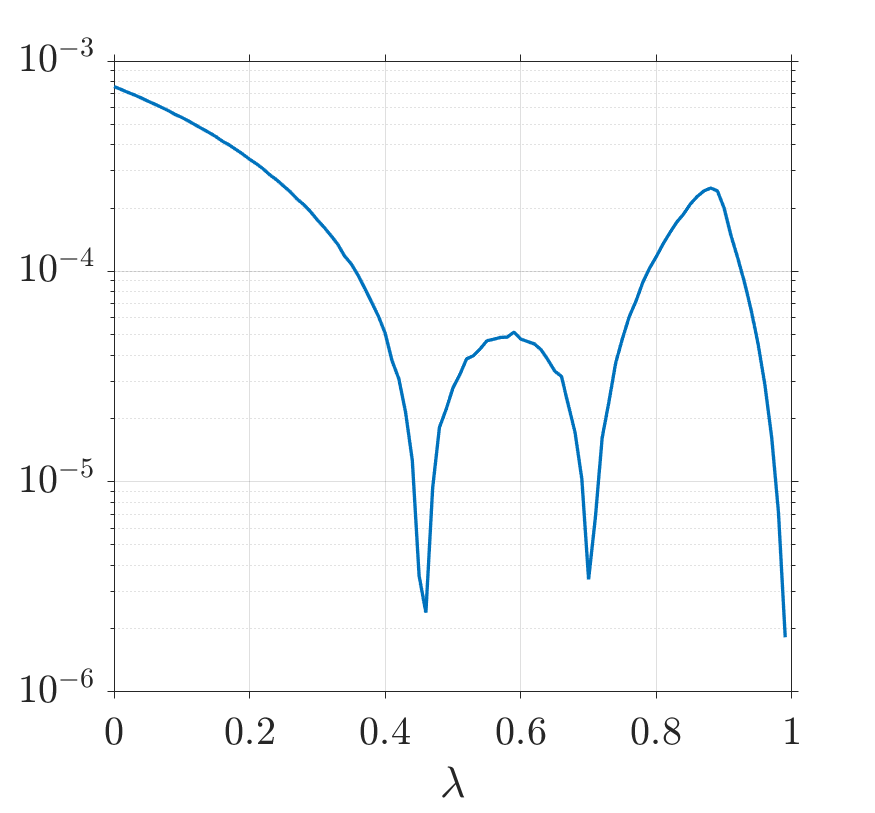}
    \caption{One DoF}
    \label{fig:1d_error_a}
  \end{subfigure}
  \hfill
  \begin{subfigure}[b]{0.32\textwidth}
    \includegraphics[width=\textwidth, height=\textwidth]{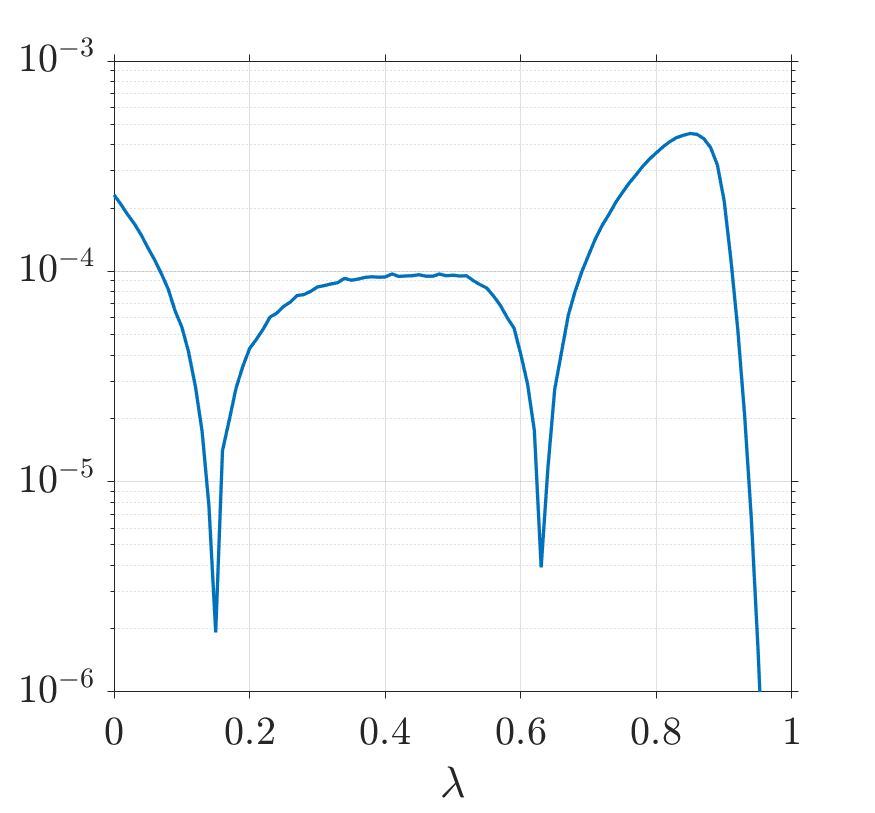}
    \caption{Two DoF}
  \end{subfigure}
  \hfill
  \begin{subfigure}[b]{0.32\textwidth}
    \includegraphics[width=\textwidth, height=\textwidth]{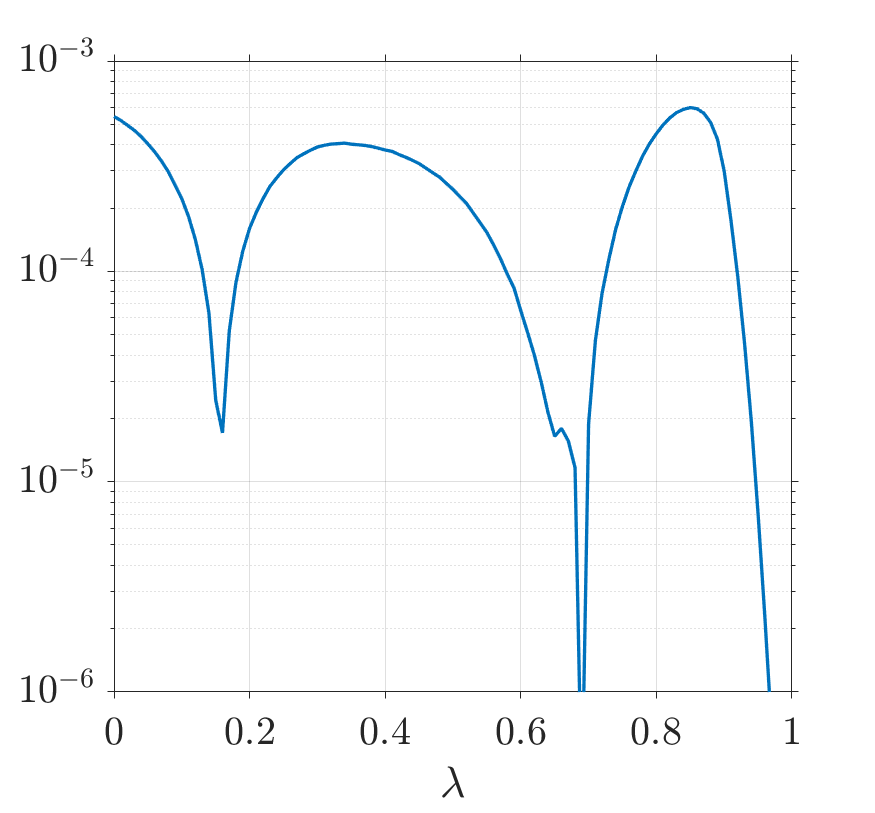}
    \caption{Three DoF}
  \end{subfigure}
  \caption{Absolute error between QoI of exact and approximate solutions for different $\lambda$ values.}
  \label{fig:1d_error}
\end{figure}

\subsection{1D advection with multiple QoIs}
This example is based on the same model problem of Section~\ref{sec:1D_adv}, but now we intend to approach two quantities of interest simultaneously:   
%
$q_{1}(u_\lambda) := u_\lambda(x_1)$ and $q_{2}(u_\lambda) := u_\lambda(x_2)$, 
where $x_1, x_2\in[0,1]$ are two different values. We also have considered now discrete trial spaces based on three, four and five elements. The training routine has been modified accordingly, and is driven now by the following minimization problem:
%
%
\begin{equation}
\nonumber
\left\{
\begin{array}{l}
\theta^*=\displaystyle \argmin_{\theta\in\Phi}  \displaystyle {1\over 2}\sum_{i=1}^{N_s}\left| q_1(u_{h,\lambda_i,\omega})-q_1(u_{\lambda_i})\right |^2 + \left| q_2(u_{h,\lambda_i,\omega})-q_2(u_{\lambda_i})\right |^2,\\
\mbox{subject to:} 
\left\{
\begin{array}{lll}
 \omega(\cdot) = \sigma(\ANN(\cdot\,;\theta)). & &\\
 (r_{h,\lambda_i,\omega},v_h)_{\mbbV,\omega} + b(u_{h,\lambda_i,\omega},v_h) & = \ell_{\lambda_i}(v_h), & \forall\, v_h\in\mbbV_h,\\
b(w_h,r_{h,\lambda_i,\omega}) & =0,  & \forall\, w_h\in \mbbU_h.
\end{array}
\right.
\end{array}
\right.
\end{equation}
For this example, we consider a training set of size $N_s=12$ with $\lambda_i = (i-1)/11$, for all $i=1,\dots,12$. The weight $\omega(x)$ will be described by the sigmoid of an artificial neural network  that depends on one single hidden layer and $N_n = 6$ hidden neurons.
Numerical results are depicted in Figures~\ref{fig:1d_solutions_2qoi} and~\ref{fig:2qoi_error} for $x_1=0.3$ and $x_2=0.7$. Accurate values of both QoIs are obtained for the entire range of~$\lambda$. These results are roughly independent of the size of the trial space.
%
%
\begin{figure}[!t]
\begin{subfigure}[b]{0.32\textwidth}
    \includegraphics[width=\textwidth, height=\textwidth]{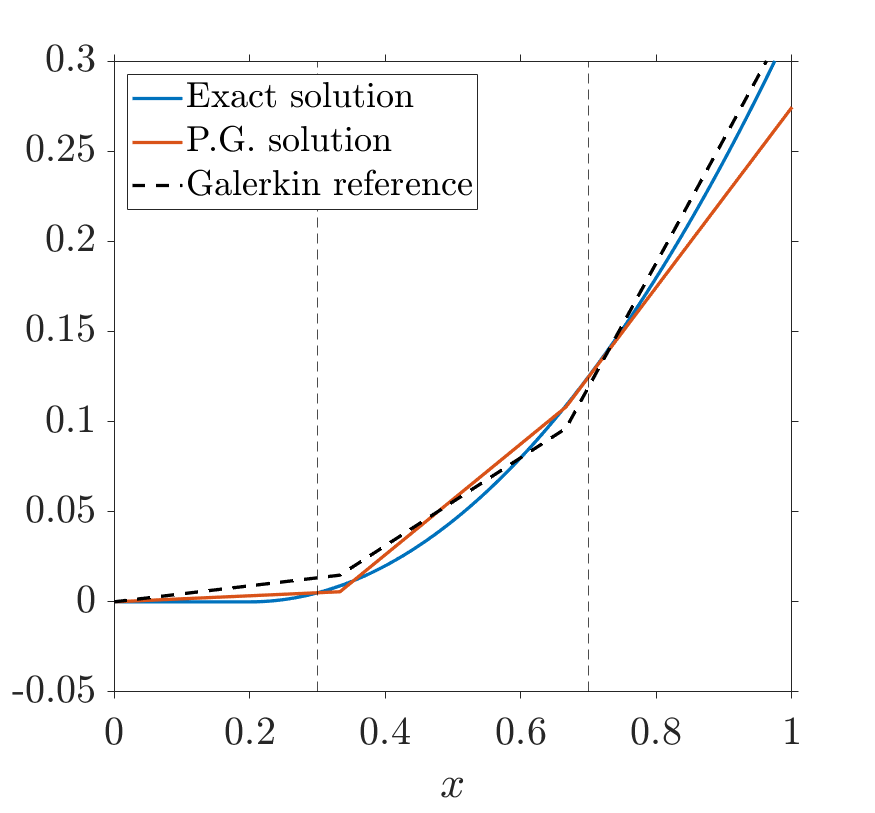}
    \caption{Three elements}
  \end{subfigure}
  \hfill
  \begin{subfigure}[b]{0.32\textwidth}
    \includegraphics[width=\textwidth, height=\textwidth]{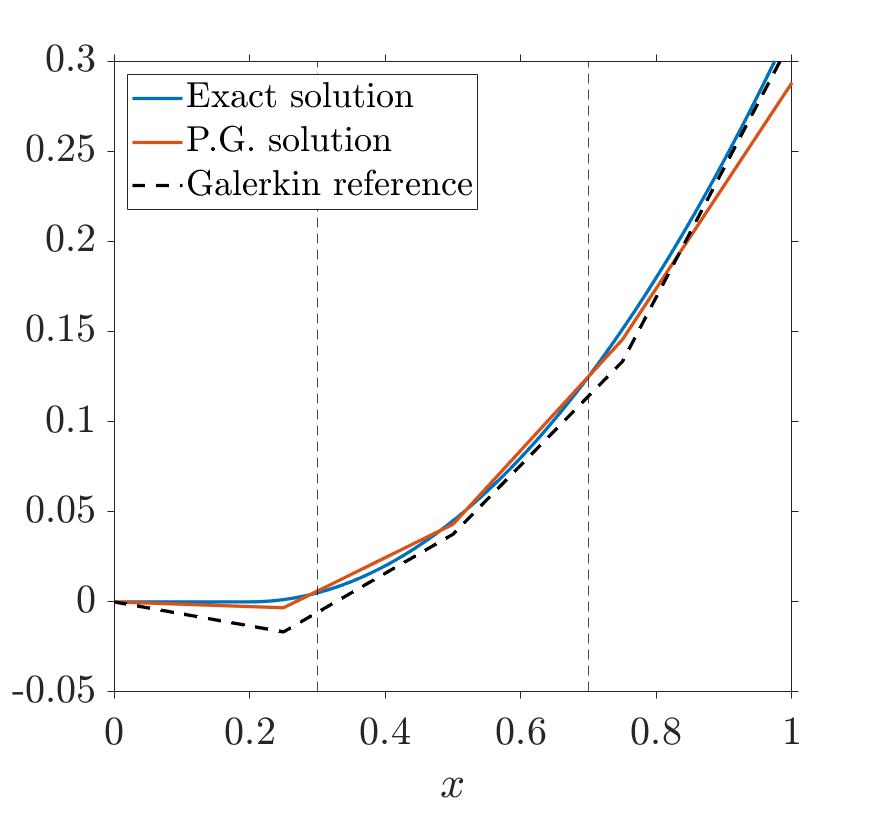}
    \caption{Four elements}
  \end{subfigure}
  \hfill
  \begin{subfigure}[b]{0.32\textwidth}
    \includegraphics[width=\textwidth, height=\textwidth]{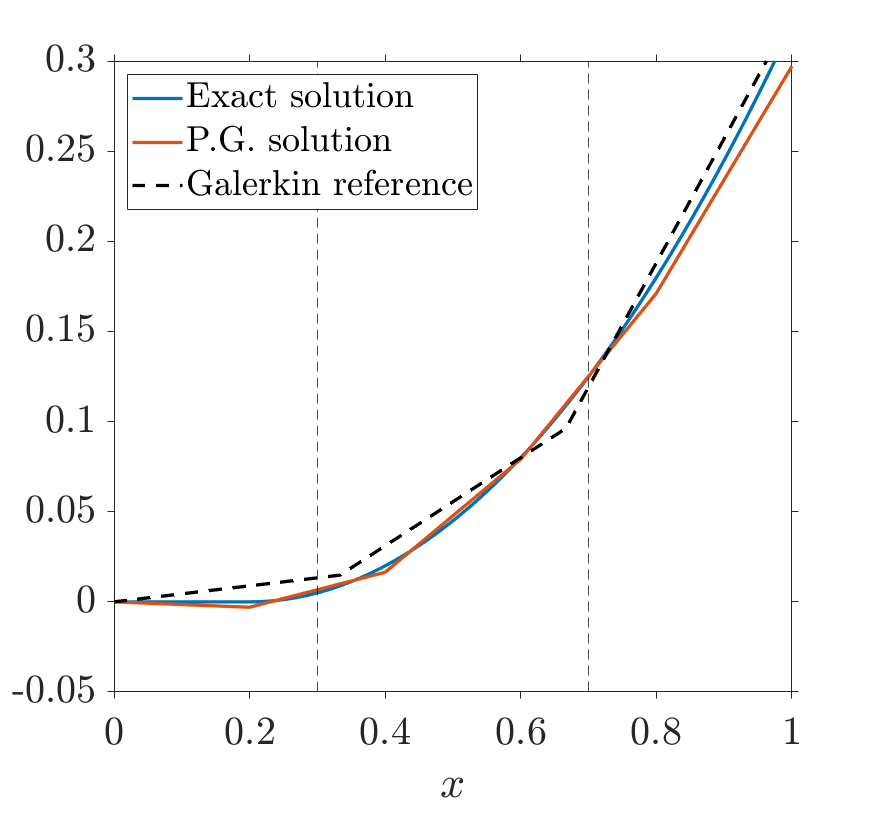}
    \caption{Five elements}
  \end{subfigure}
  \caption{Petrov-Galerkin solution with projected optimal test functions with trained weight. Dotted lines show the QoI locations (0.3 and 0.7) and parameter value is $\lambda=0.2$.}
  \label{fig:1d_solutions_2qoi}
\end{figure}

\begin{figure}[!t]
\begin{subfigure}[b]{0.32\textwidth}
    \includegraphics[width=\textwidth, height=\textwidth]{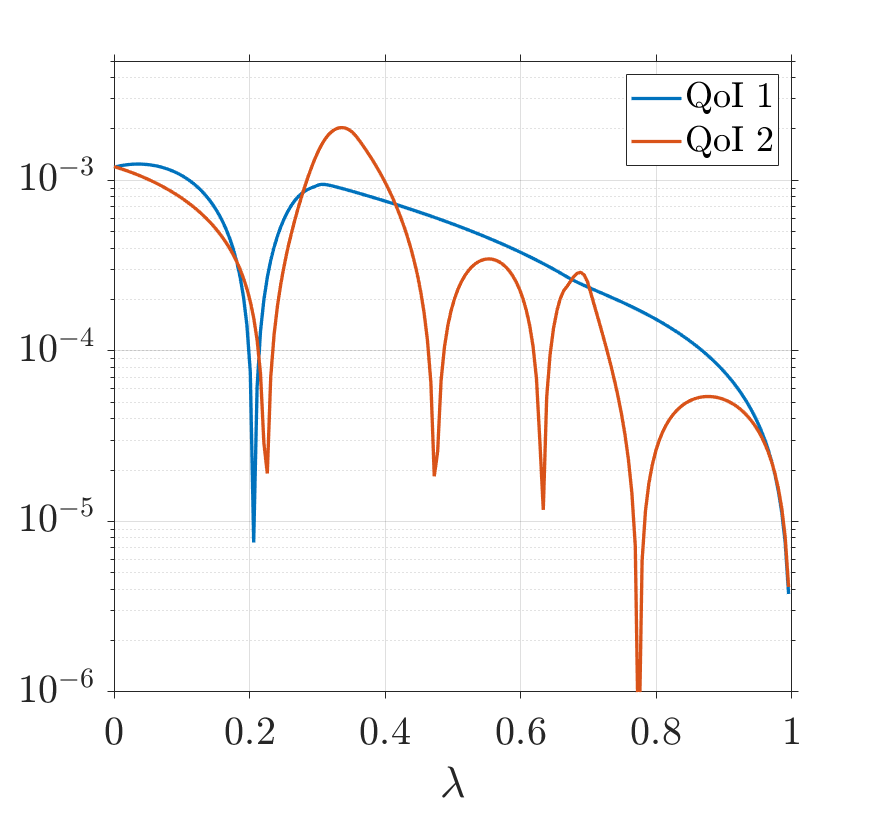}
    \caption{Three DoF}
  \end{subfigure}
  \hfill
  \begin{subfigure}[b]{0.32\textwidth}
    \includegraphics[width=\textwidth, height=\textwidth]{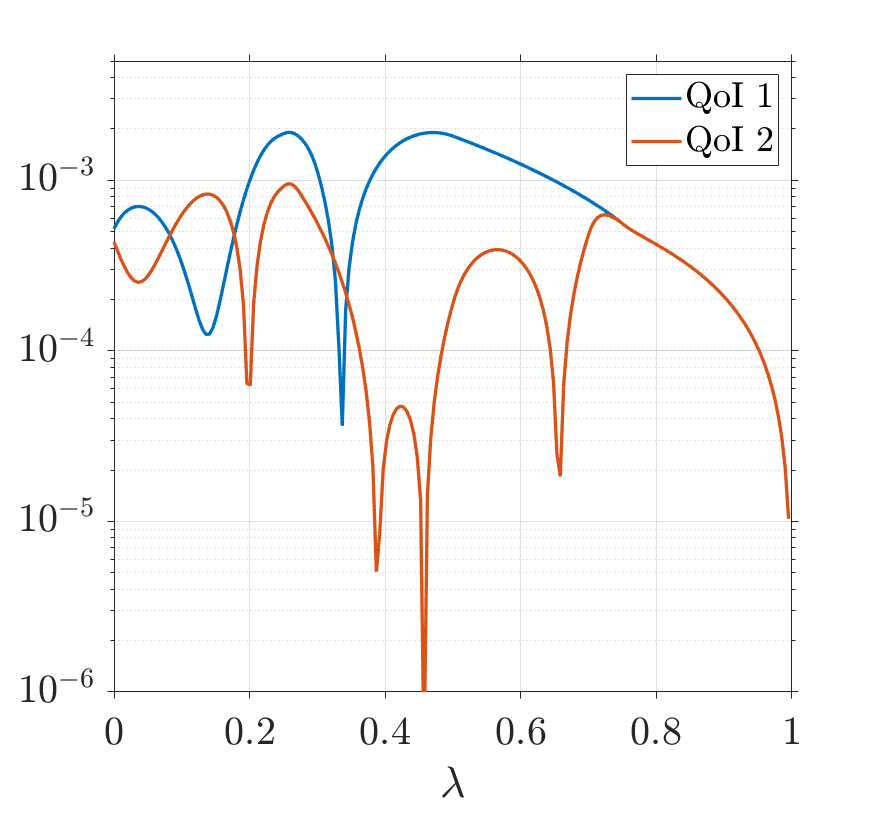}
    \caption{Four DoF}
  \end{subfigure}
  \hfill
  \begin{subfigure}[b]{0.32\textwidth}
    \includegraphics[width=\textwidth, height=\textwidth]{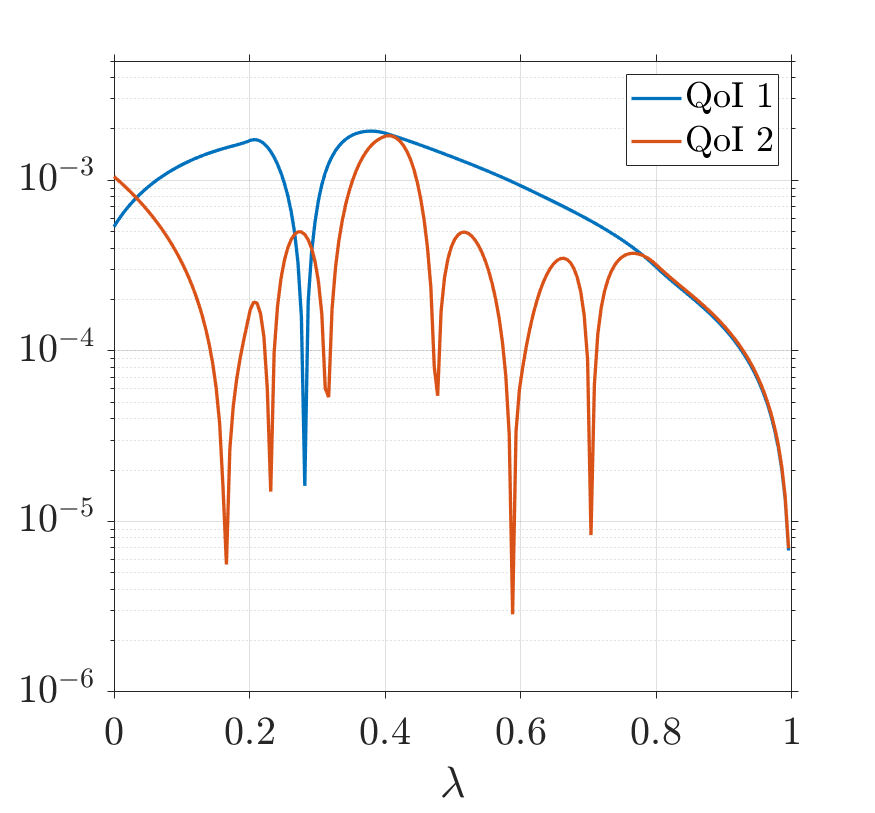}
    \caption{Five DoF}
  \end{subfigure}
  \caption{Absolute error between QoI of exact and approximate solutions for different $\lambda$ values, for each QoI $q_1(u) = u(0.3)$ and $q_2(u) = u(0.7)$.}
  \label{fig:2qoi_error}
\end{figure}

\subsection{2D diffusion with one QoI}
Consider the two-dimensional unit square $\Omega=[0,1]\times[0,1]$ and the family of PDEs:
\begin{equation}\label{eq:pde_example1}
\left\{
\begin{array}{r@{}ll}
-\Delta u&{}=f_{\lambda} & \hbox{in } \Omega, \\
u &{}=0 & \hbox{over } \partial\Omega,
\end{array}
\right.
\end{equation}
where the family of functions $\{f_{\lambda}\}_{\lambda\in(0,1)}$ is described by the formula:
\begin{equation}
\nonumber
\begin{split}
f_{\lambda}(x_1,x_2) = 2\pi^2(1+\lambda^2)\sin(\pi x_1)\sin(\lambda \pi x_1)\sin(\pi x_2)\sin(\lambda \pi x_2) - \\ 2\lambda\pi^2[ \cos(\pi x_1)\cos(\lambda \pi x_1)\sin(\pi x_2)\sin(\lambda \pi x_2)+ \\ \sin(\pi x_1)\sin(\lambda \pi x_1)\cos(\pi x_2)\cos(\lambda \pi x_2)].
\end{split}
\end{equation}
Accordingly, the reference exact solution of~\eqref{eq:pde_example1} is:
$$
u_{\lambda}(x) = \sin(\pi x_1)\sin(\lambda \pi x_1)\sin(\pi x_2)\sin(\lambda \pi x_2).
$$
The quantity of interest chosen for this example will be the average 
\begin{equation}\label{eq:QoI_ex1_2D}
q(u_\lambda) := \frac{1}{|\Omega_0|}\int_{\Omega_0} u_\lambda(x)\,dx,
\end{equation}
with $\Omega_0 := [0.79\, ,\, 0.81]\times [0.39\, ,\, 0.41]\subset\Omega$ (see~Figure~\ref{fig:2d_meshes}).

The variational formulation of problem~\eqref{eq:pde_example1} will be:
\begin{equation}
\nonumber
\left\{
\begin{array}{l}
\text{Find } u_\lambda\in \mbbU \text{ such that:} \\
b(u_\lambda,v):=\displaystyle\int_{\Omega} \nabla u_\lambda\cdot \nabla v = \int_{\Omega} f_{\lambda}v=:\ell_{\lambda}(v),\qquad\forall v\in \mbbV, 
\end{array}
\right.
\end{equation}
where $\mbbU = \mbbV = H^1_{0}(\Omega)$, and
$\mbbV$ is endowed with the weighted inner-product:
$$ (v_1,v_2)_{\mathbb V,\omega}:=\int_{\Omega} \omega\, \nabla v_1\cdot \nabla v_2\,,\quad
\forall v_1,v_2\in \mbbV.$$
As in the previous example, the weight is going to be determined using an artificial neural network so that
$\omega(x_1,x_2)=\sigma(\ANN(x_1,x_2;\theta))$. Such a network is composed by one single hidden layer, and $N_n = 5$ hidden neurons. Hence, $\theta$ contains 20 parameters to estimate, i.e.,  
$$
\mathrm{ANN}(x_1,x_2;\theta) = \sum_{j=1}^{N_n} \theta_{j4} \sigma(\theta_{j1}x_1 + \theta_{j2}x_2 + \theta_{j3}).
$$
To train the $\ANN$, we use the inputs $\{\lambda_i\}_{i=1}^{9}$, where $\lambda_i = 0.125(i-1)$, and its corresponding quantities of interest $\{q(u_{\lambda_i})\}_{i=1}^{9}$, by means of equation~\eqref{eq:QoI_ex1_2D}. Again, the training procedure is based on the constrained minimization~\eqref{eq:eqmin_prob_mixed}. For the experiments, we use coarse discrete trial spaces $\mbbU_h$ having one, five, and eight degrees of freedom respectively (see~Figure~\ref{fig:2d_meshes}). In each case,  the test space $\mbbV_h$ has been set to be a piecewise quadratric conforming polynomial space, over a uniform triangular mesh of 1024 elements.
The minimization algorithm~\eqref{eq:eqmin_prob_mixed} stops when a tolerance \texttt{tol}$=9\cdot 10^{-7}$ is reached.

The errors on the QoI are depicted in Figure~\ref{fig:2d_error} for each trial space under consideration, and show relative errors below~$10^{-3}$ for the entire range of~$\lambda$.  
%
%
\begin{figure}[!t]
\begin{subfigure}[b]{0.32\textwidth}
    \includegraphics[width=\textwidth, height=\textwidth]{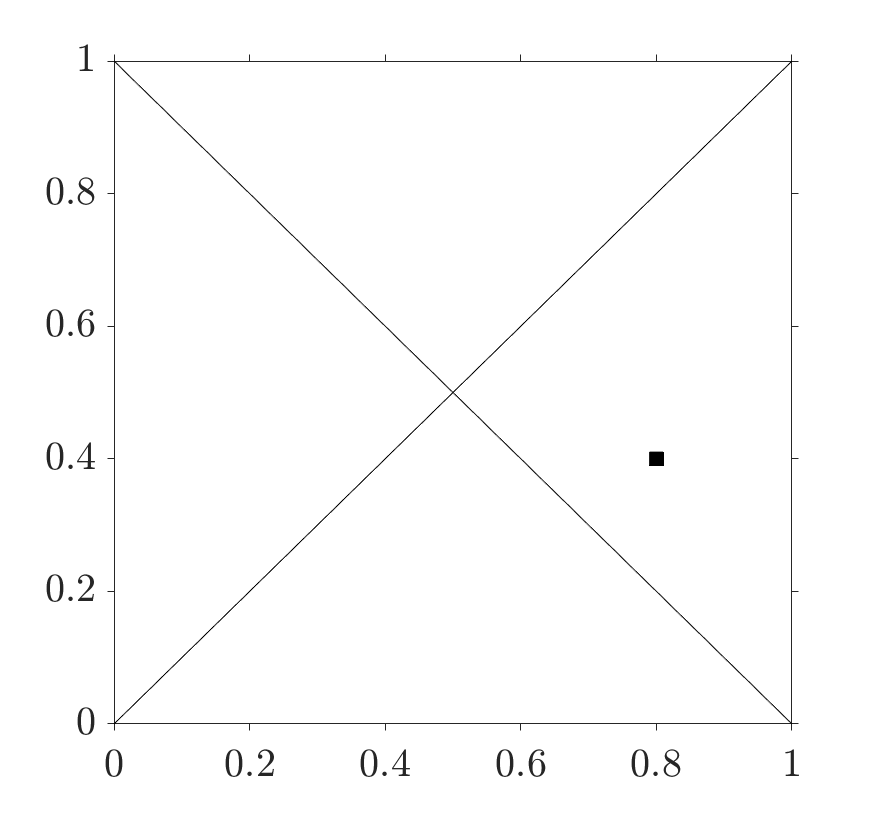}
    \caption{One DoF}
  \end{subfigure}
  \hfill
  \begin{subfigure}[b]{0.32\textwidth}
    \includegraphics[width=\textwidth, height=\textwidth]{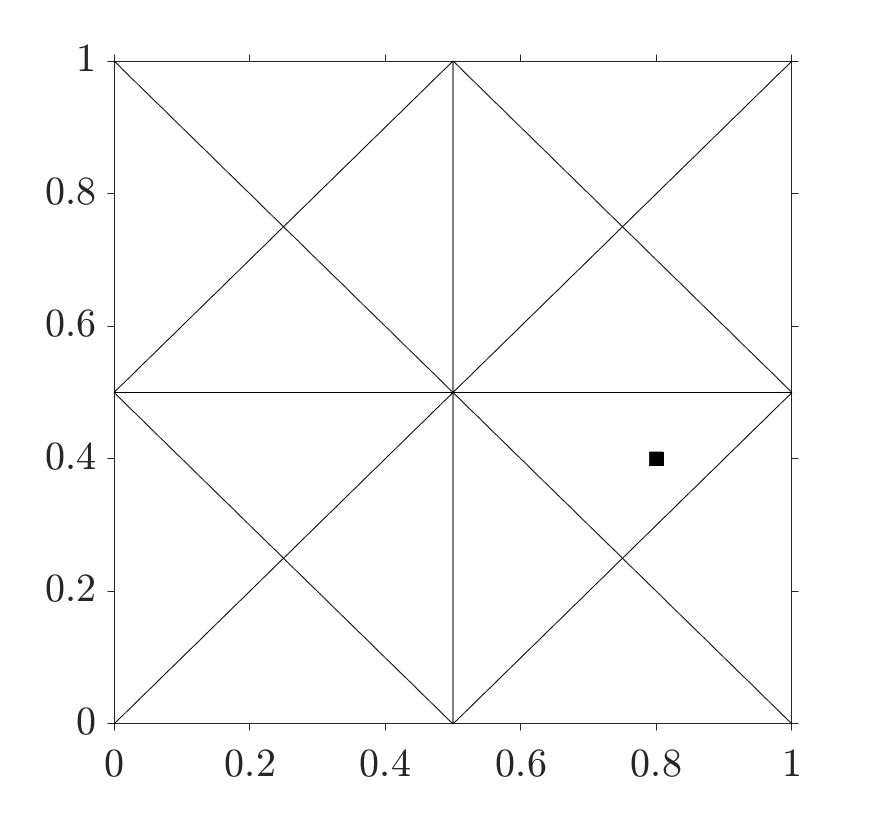}
    \caption{Five DoF}
  \end{subfigure}
  \hfill
  \begin{subfigure}[b]{0.32\textwidth}
    \includegraphics[width=\textwidth, height=\textwidth]{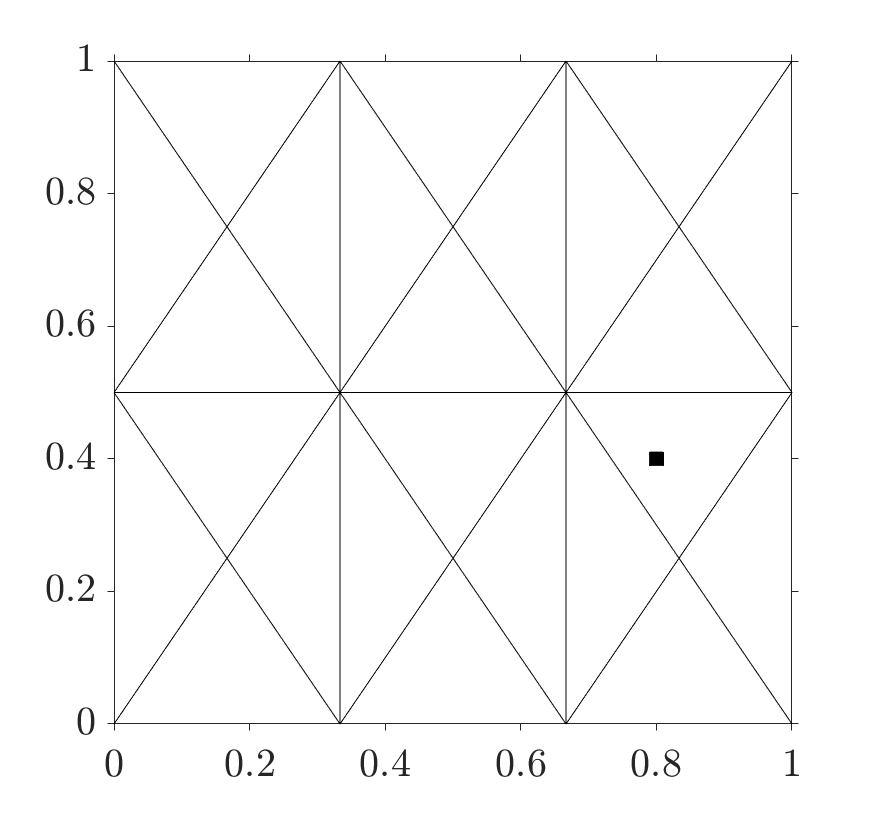}
    \caption{Eight DoF}
  \end{subfigure}
  \caption{Meshes considered for the discrete trial space $\mbbU_h$. The black square represent the quantity of interest location $\Omega_0=[0.79\, ,\, 0.81]\times [0.39\, ,\, 0.41]$.}
  \label{fig:2d_meshes}
\end{figure}
\begin{figure}[!t]
\begin{subfigure}[b]{0.32\textwidth}
    \includegraphics[width=\textwidth, height=\textwidth]{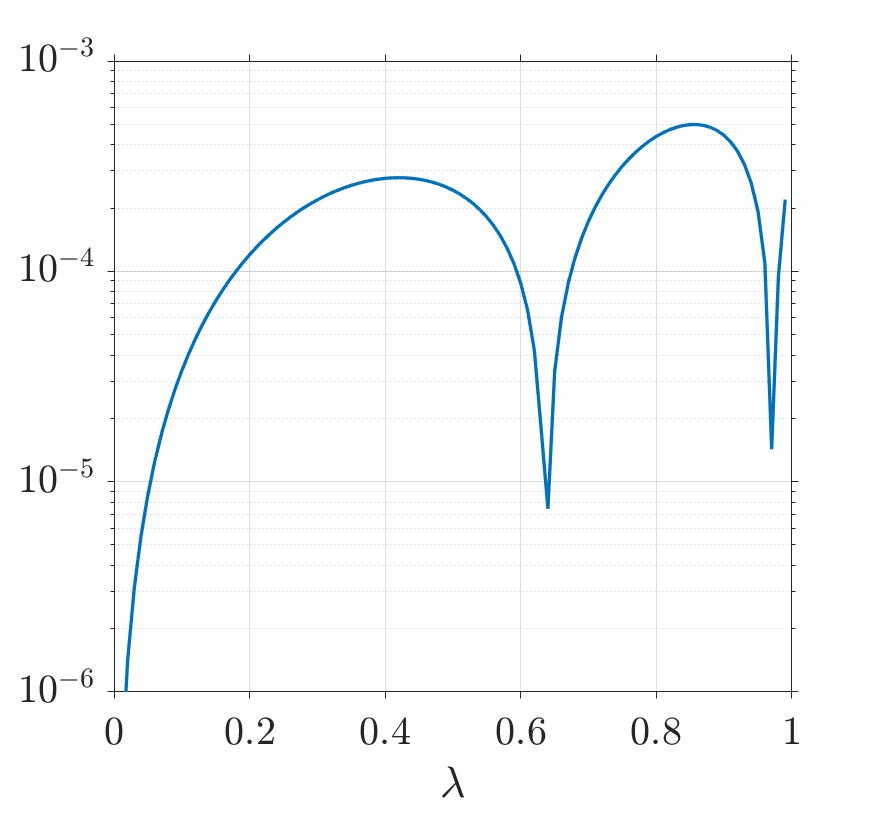}
    \caption{One DoF}
    \label{fig:2d_error_a}
  \end{subfigure}
  \hfill
  \begin{subfigure}[b]{0.32\textwidth}
    \includegraphics[width=\textwidth, height=\textwidth]{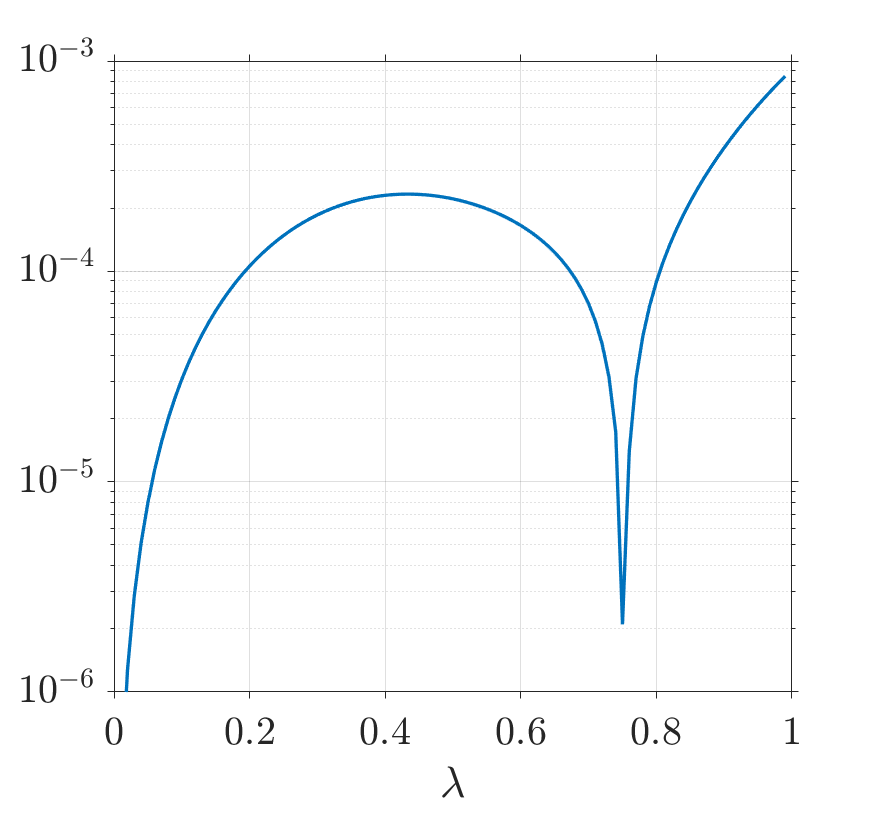}
    \caption{Five DoF}
  \end{subfigure}
  \hfill
  \begin{subfigure}[b]{0.32\textwidth}
    \includegraphics[width=\textwidth, height=\textwidth]{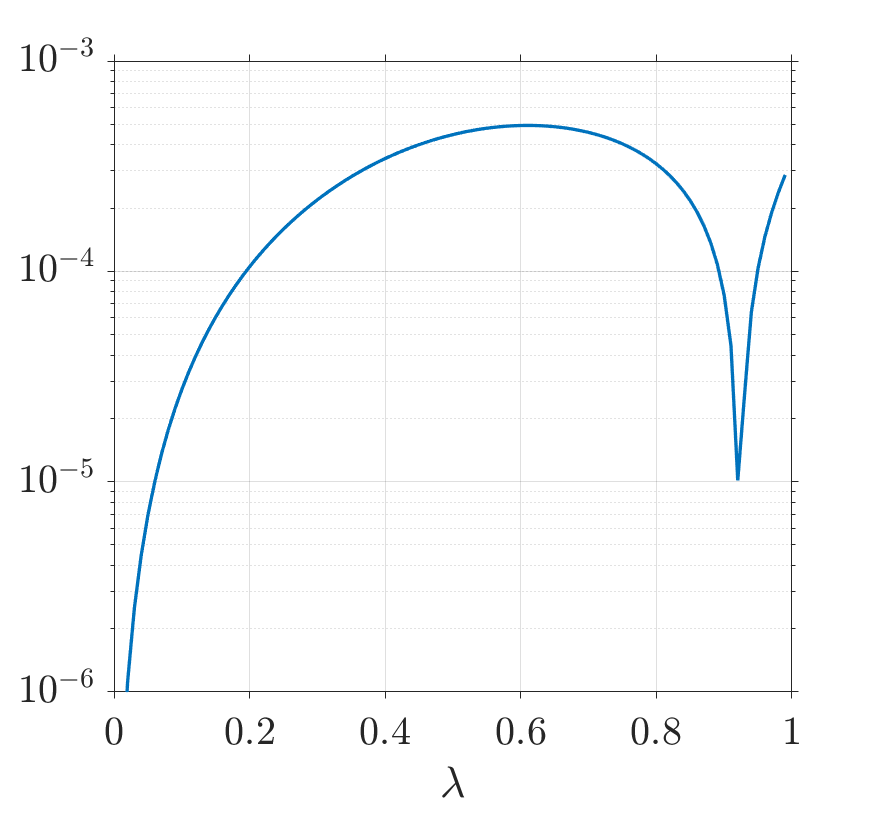}
    \caption{Eight DoF}
  \end{subfigure}
  \caption{Absolute error between QoI of exact and approximate solutions for different $\lambda$ values.}
  \label{fig:2d_error}
\end{figure}

%

\section{Conclusions}
\label{sec:concl}
In this paper, we introduced the concept of data-driven finite element methods. These methods are tuned within a machine-learning framework, and are tailored for the accurate computation of output quantities of interest, regardless of the underlying mesh size. In fact, on coarse meshes, they deliver significant improvements in the accuracy compared to standard methods on such meshes.
\par
We presented a stability analysis for the discrete method, which can be regarded as a Petrov--Galerkin scheme with parametric test space that is equivalent to a minimal-residual formulation measuring the residual in a (discrete) dual weighted norm. Numerical examples were presented for elementary one- and two-dimensional elliptic and hyperbolic problems, in which weight functions are tuned that are represented by artificial neural networks with up to 20~parameters.
\par
Various extensions of our methodology are possible. While we only focussed on linear quantities of interest, nonlinear ones can be directly considered. Also, it is possible to consider a dependence of the bilinear form on~$\lambda$, however, this deserves a completely separate treatment, because of the implied $\lambda$-dependence of the trial-to-test map and the $B$-matrix. An open problem of significant interest is the dependence of the performance of the trained method on the richness of the parametrized weight function. While we showed that in the simplest example of 1-D diffusion with one degree of freedom, the weight function allows for exact approximation of quantities of interest, it is not at all clear if this is valid in more general cases, and what the effect is of (the size of) parametrization. 
%
\appendix
\label{sec:appendix}
\section{Proof of Theorem~\ref{thm:discrete_wellposedness}}\label{sec:proof}
The mixed scheme~\eqref{eq:mixed_system} has a classical \emph{saddle point} structure, which is uniquely solvable since the \emph{top left} bilinear form is an inner-product (therefore coercive) and $b(\cdot,\cdot)$ satisfies the discrete inf-sup condition~\eqref{eq:inf-sup} {(see, e.g.,~\cite[Proposition~2.42]{ErnGueBOOK})}.  

The a~priori estimates~\eqref{eq:apriori_estimates} are very well-known in the residual minimization FEM literature (see, e.g.,~\cite{GopQiuMOC2014, BroSteCAMWA2014, MugZeeARXIV2018}). However, for the sake of completeness, we show here how to obtain them. Let $v_{h,\lambda,\omega}\in\mbbV_h$ be such that (cf.~\eqref{eq:T_h,omega}):
\begin{equation}\label{eq:supremizer}
(v_{h,\lambda,\omega},v_h)_{\mbbV,\omega}=b(u_{h,\lambda,\omega},v_h),\quad\forall 
v_h\in\mbbV_h\,.
\end{equation}
In particular, combining eq.~\eqref{eq:supremizer} with eq.~\eqref{eq:mixed_system_b} of the mixed scheme, we get the orthogonality property:
\begin{equation}\label{eq:r_orthogonal}
(v_{h,\lambda,\omega},r_{h,\lambda,\omega})_{\mbbV,\omega}=b(u_{h,\lambda,\omega},r_{h,\lambda,\omega})=0\,.
\end{equation}
To get the first estimate observe that:
\begin{alignat}{2}
\|u_{h,\lambda,\omega}\|_\mbbU  \leq  & {1\over\gamma_{h,\omega}}
\sup_{v_h\in\mbbV_h}{|b(u_{h,\lambda,\omega},v_h)|\over \|v_h\|_{\mbbV,\omega}}\tag{by~\eqref{eq:inf-sup}}\\
= & {1\over\gamma_{h,\omega}}
\sup_{v_h\in\mbbV_h}{|(v_{h,\lambda,\omega},v_h)_{\mbbV,\omega}|\over \|v_h\|_{\mbbV,\omega}}
\tag{by~\eqref{eq:supremizer}}\\
= & {1\over\gamma_{h,\omega}}
{|(v_{h,\lambda,\omega},v_{h,\lambda,\omega})_{\mbbV,\omega}|\over \|v_{h,\lambda,\omega}\|_{\mbbV,\omega}}\tag{since $v_{h,\lambda,\omega}$ is the supremizer}\\
= & {1\over\gamma_{h,\omega}}
{|(r_{h,\lambda,\omega}+v_{h,\lambda,\omega},v_{h,\lambda,\omega})_{\mbbV,\omega}|\over \|v_{h,\lambda,\omega}\|_{\mbbV,\omega}}\tag{by~\eqref{eq:r_orthogonal}}\\
= & {1\over\gamma_{h,\omega}}
{|\ell_\lambda(v_{h,\lambda,\omega})|\over \|v_{h,\lambda,\omega}\|_{\mbbV,\omega}}
\tag{by~\eqref{eq:supremizer}~and~\eqref{eq:mixed_system_a}}\\
= & {1\over\gamma_{h,\omega}}
{|b(u_\lambda,v_{h,\lambda,\omega})|\over \|v_{h,\lambda,\omega}\|_{\mbbV,\omega}}
\tag{by~\eqref{eq:VarFor_gen_prob}}\\
\leq & {M_\omega\over\gamma_{h,\omega}}\|u_\lambda\|_\mbbU\,.
\tag{by~\eqref{eq:B_constants}}
\end{alignat}

For the second estimate we define the projector $P:\mbbU\to\mbbU_h$, such that $Pu\in\mbbU_h$ corresponds to the second component of the solution of the mixed system~\eqref{eq:mixed_system} with right hand side $b(u,\cdot)\in\mbbV^*$ in~\eqref{eq:mixed_system_a}. We easily check that $P$ is a bounded linear proyector satisfying
$P^2=P\neq 0,I$, and $\|P\|\leq M_\omega/\gamma_{h,\omega}$. Hence, from Kato's identity $\|I-P\|=\|P\|$ for Hilbert space projectors~\cite{KatNM1960}, we get for any $w_n\in\mbbU_n$:
\begin{equation}\notag
\|u_\lambda-u_{h,\lambda,\omega}\|_\mbbU=\|(I-P)u_\lambda\|_\mbbU=\|(I-P)(u_\lambda-w_h)\|_\mbbU\leq \|P\|\|u_\lambda-w_h\|_\mbbU\,.
\end{equation}
Thus, the a~priori error estimate follows using the bound of $\|P\|$ and taking the infimum over all $w_h\in\mbbU_h$.

%
\bibliography{BibFile}
\bibliographystyle{siam}
%
%
%
%
\end{document}